\newcommand{\R}{\mathbb{R}}
\newcommand{\C}{\mathbb{C}}
\newcommand{\N}{\mathbb{N}}
\newcommand{\cR}{\mathcal{R}}
\newcommand{\fG}{\mathfrak{G}}
\newtheorem{mainthm}{Theorem}
\begin{document}

\title*{On the Injectivity of STFT Phase Retrieval with super-exponential decaying window function}
\titlerunning{Injectivity of STFT Phase Retrieval}
% Use \titlerunning{Short Title} for an abbreviated version of
% your contribution title if the original one is too long
\author{Shuang Guan and\\ Kasso A. Okoudjou}
% Use \authorrunning{Short Title} for an abbreviated version of
% your contribution title if the original one is too long
\institute{Shuang Guan \at Department of Mathematics, Tufts University, Medford, MA 02155, USA \email{shuang.guan@tufts.edu}
\and Kasso A. Okoudjou \at Department of Mathematics, Tufts University, Medford, MA 02155, USA \email{kasso.okoudjou@tufts.edu}}
%
% Use the package "url.sty" to avoid
% problems with special characters
% used in your e-mail or web address
%
\maketitle

\abstract{We investigate the uniqueness of short-time Fourier transform phase retrieval problems in $L^2(\R)$. In particular, for underlying window functions whose Fourier transform decay faster than any exponential function, we derive  sufficient conditions on discrete sampling sets  for unique phase retrieval from the spectrogram. This result generalizes previous uniqueness guarantees on sampling sets for Gaussian windows.}
\section{Introduction}
\label{sec:1}
The phase retrieval problem refers to the task of recovering a signal from only intensity measurements under a given operator. This problem arises naturally in a wide range of applications in science and engineering, including X-ray crystallography \cite{millane1990phase}, coherent diffraction imaging \cite{walther1963question} and speech recognition \cite{picone1996fundamentals}. In recent years, the phase retrieval problem has attracted significant interests, involving tools from harmonic analysis, complex analysis, functional analysis and numerical optimization \cite{candes2015phase,alaifari2021phase,grohs2019stable}. Mathematically the phase retrieval problem is a nonlinear inverse problem that is typically ill-posed due to the natural ambiguity introduced by the loss of phase information. Over the years, addressing this subtle problem has led to a rich literature seeking to examine conditions for stability and feasibility of certain reconstruction algorithms, spanning both discrete and continuous settings.\\
\\
In this paper, we specifically study the phase retrieval problem in the setting of short-time Fourier transform (STFT). For a signal $f\in L^2(\R)$ and a window function $g\in L^2(\R)$, the STFT is defined as a function in $L^2(\R^2)$ given by:
$$V_gf(x,\omega) = \int_{\R} f(t)\overline{g(t-x)} e^{2\pi i \omega \cdot t}dt,  \quad (x, \omega) \in \R^2.$$
In many real-world applications, detectors can only capture the intensity, $|V_gf(x,\omega)|$, known as the spectrogram. The loss of phase information leads to the STFT phase retrieval problem: determine the unique signal $f$ up to a global sign from a sampling set of spectrogram $|V_gf(\Lambda)|$ for some $\Lambda \subset \R^{2}$.\\

While the STFT phase retrieval problem is ill-posed in general, it is known that uniqueness can still be guaranteed, up to a global phase factor, by imposing certain conditions on the window function $g$ and sampling on a sufficiently dense set $\Lambda$. However, the difficulty of this problem is two-fold: First, it was recently proved that if $\Lambda$ is an integer lattice, i.e, $\Lambda = A\mathbb{Z}^{2}$ for some $A \in GL(2,\R)$, then the uniqueness is never attained \cite{grohs2023injectivity}. 
%However, as the problem is concerning real-world applications, sampling only on a discrete set is strongly desired. 
The second difficulty is concerning the window function $g$, in the sense that if $V_gg(x,\omega)$ vanishes on a set of positive Lebesgue measure, then there's no established uniqueness result for general $f$.\\

Recently L. Liehr and P. Grohs \cite{grohs2024phaseless} showed that when $g$ is Gaussian, uniqueness is guaranteed for general $f \in L^2(\R)$ if the sampling set $\Lambda$ is a square-root lattice, i.e., $\Lambda = \{A(\pm\sqrt{n},\pm\sqrt{n}),n \in \N\}$ for some $A \in GL(2,\R)$. In what follows, we investigate how a weaker decay property of window function $g$ affects the injectivity result and determine the corresponding sampling set. More specifically, our main result can be stated as follows.

\begin{theorem}\label{MainTheorem}
    Let $g \in L^2(\R)$ with Fourier transform satisfies the super-exponential decay condition:
    \begin{equation*}
        |\hat{g}(\xi)| \leq C e^{-a|\xi|^m}
    \end{equation*}
    for some constants $m>1$, $C>0$, $a \in (1,\infty)$. Let $\tau_1, \tau_2 \in \R^+$ satisfy
    \begin{equation*}
        \tau_1 < \big(\frac{1}{(2\pi)^{m/(m-1)}(ma)^{-1/(m-1)}e}\big)^{\frac{m-1}{m}}, \quad \quad \quad \tau_2< (\frac{1}{ame})^{\frac{1}{m}}.
    \end{equation*}
    Define the set $\Lambda$ such that: 
    \begin{equation*}
        \Lambda =  \{(\pm\tau_1n^{\frac{m-1}{m}},\pm\tau_2n^{\frac{1}{m}}),n \in \N\}
    \end{equation*} 
    Then the following statements are equivalent for every $f, h \in L^2(\R):$
    \begin{enumerate}
        \item $|V_g(f)(\lambda)| = |V_gh(\lambda)|$ for every $\lambda \in \Lambda$.
        \item $f = e^{i \alpha}h$ for some $\alpha \in [0,2\pi).$
    \end{enumerate}
\end{theorem}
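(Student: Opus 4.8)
The implication $(2)\Rightarrow(1)$ is trivial since $|V_g(e^{i\alpha}h)|=|V_gh|$ pointwise; the whole content is $(1)\Rightarrow(2)$, and the plan is to convert the phaseless data on $\Lambda$ into a vanishing statement for an entire function of one variable of finite order, and then pass to continuous uniqueness. \emph{Step 1 (analytic structure).} I would first record that the super-exponential decay of $\hat g$ forces $g$ to extend to an entire function: $g(t+iv)=\int_\R\hat g(\xi)e^{2\pi i(t+iv)\xi}\,d\xi$ converges for every $v$, and the Legendre-type extremum
\[
\sup_{\xi\ge 0}(-a\xi^m+b\xi)=\frac{m-1}{m}\,(ma)^{-1/(m-1)}\,b^{m/(m-1)}
\]
with $b=2\pi|v|$ yields $\|g(\cdot+iv)\|_{L^2}\le C'\exp(\sigma_1|v|^{m/(m-1)})$, where $\sigma_1:=\frac{m-1}{m}(ma)^{-1/(m-1)}(2\pi)^{m/(m-1)}$. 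Hence $g$, and $g^*(z):=\overline{g(\bar z)}$, are entire of order $\le\rho:=\frac{m}{m-1}$ and type $\le\sigma_1$, so from $V_gf(x,\omega)=\int_\R f(t)\,g^*(t-x)\,e^{2\pi i\omega t}\,dt$ and Cauchy--Schwarz the section $x\mapsto V_gf(x,\omega)$ extends to an entire function of order $\le\rho$ and type $\le\sigma_1$; the same computation on the Fourier side, through $|V_gf(x,\omega)|=|V_{\hat g}\hat f(\omega,-x)|$, gives the dual growth in the $\omega$-variable, of order $m$ with type governed by $a$. Consequently both $x\mapsto|V_gf(x,\omega)|^2$ and $\omega\mapsto|V_gf(x,\omega)|^2$ are restrictions to $\R$ of entire functions (namely $V_gf(z,\omega)\overline{V_gf(\bar z,\omega)}$ and $V_gf(x,\zeta)\overline{V_gf(x,\bar\zeta)}$) of finite order $\le\rho$, resp. $\le m$, and finite type, and the same for $h$.

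\emph{Step 2 (collapse to one entire function).} I would then parametrize the sampling set by the curve $\gamma(s)=(\tau_1 s^{m-1},\tau_2 s)$, so that the first-quadrant samples are $\gamma(n^{1/m})$, $n\in\N$, the other three sign patterns giving the reflected curves, and set
\[
\Gamma_f(s):=V_gf(\gamma(s))\,\overline{V_gf(\overline{\gamma(s)})},
\]
holomorphic in $s$ on a sector about $\R_+$ (via the principal branch of $s^{m-1}$) and equal to $|V_gf(\gamma(s))|^2$ for $s>0$, and likewise $\Gamma_h$. The role of the exponents $\frac{m-1}{m},\frac1m$ is that $\rho(m-1)=m$: the two contributions $|\tau_1 s^{m-1}|^{\rho}$ and $|\tau_2 s|^{m}$ to the growth are both of order $|s|^{m}$, so $\Gamma_f,\Gamma_h$ have order $\le m$ and finite type, with the $\tau_1$- and $\tau_2$-parts separating. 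Hypothesis (1) gives $\Gamma_f(n^{1/m})=\Gamma_h(n^{1/m})$ for all $n$, and $\{n^{1/m}\}_{n\in\N}$ has counting function $\sim r^{m}$; a Jensen/Carleman counting argument — in its sharp form, that an entire function of order $m$ and type $\le T$ with controlled Hadamard data cannot vanish on a sequence of counting function exceeding $emT$ without vanishing identically — then forces $\Gamma_f\equiv\Gamma_h$ once the two projected densities of $\Lambda$ beat the two types. Carrying out this balance is exactly where the stated thresholds
\[
\tau_1<\Bigl(\frac{2}{(2\pi)^{m/(m-1)}(ma)^{-1/(m-1)}e}\Bigr)^{\frac{m-1}{m}},\qquad \tau_2<\Bigl(\frac{2}{ame}\Bigr)^{\frac1m}
\]
come from — the first pits the $x$-density of $\Lambda$ against $\sigma_1$ (the factor $e$ being the fingerprint of the Hadamard/coefficient step rather than of a bare Jensen inequality), the second does the same in the frequency variable.

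\emph{Step 3 (back to $f=e^{i\alpha}h$).} From $\Gamma_f\equiv\Gamma_h$, together with the three reflected analogues, one reads off $|V_gf|=|V_gh|$ along the complexified sampling curves; feeding this into the joint analyticity of $V_gf$ and $V_gh$ I would propagate the equality $|V_gf|=|V_gh|$ to an open subset of $\R^2$, hence to all of $\R^2$. Since $g$ and $\hat g$ are entire (hence real-analytic), the ambiguity function $V_gg$ does not vanish on a set of positive Lebesgue measure, so the uniqueness of continuous STFT phase retrieval recalled in the introduction applies and gives $f=e^{i\alpha}h$ for some $\alpha\in[0,2\pi)$, completing $(1)\Rightarrow(2)$.

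\emph{Where the difficulty lies.} The genuine obstacle — the one Liehr and Grohs had to surmount in the Gaussian case — is that $\Lambda$ is not a product set, so one cannot simply freeze a coordinate and sample a section of $V_gf$ in the other variable; the composition with $s\mapsto(\tau_1 s^{m-1},\tau_2 s)$ is precisely the device that overcomes this, the exponents being chosen so that the resulting one-variable function has order $m$ with prescribed zeros $\{n^{1/m}\}$ of matching density. Two spots will fight back. First, one must extract enough analyticity of the spectrogram in the \emph{modulation} variable — transparent in $x$, but to be wrung out on the Fourier side in $\omega$ — so that $\Gamma_f$ is holomorphic of controlled type and the two types in Step 2 are correctly identified. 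Second, upgrading ``$|V_gf|=|V_gh|$ on finitely many analytic curves'' to an open-set (hence global) equality is automatic for the Gaussian only because there the phase space carries a complex structure collapsing it to one complex dimension, so that Step 3 reduces to the identity theorem; in the present generality this step is the crux and has to be argued directly.
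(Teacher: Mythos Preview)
Your whole architecture is built on the premise that $\Lambda$ is \emph{not} a product set --- that each horizontal or vertical slice meets $\Lambda$ in at most two points --- and you therefore invent the curve $\gamma(s)=(\tau_1 s^{m-1},\tau_2 s)$ to collapse the problem to one variable. The paper's proof does the opposite: it treats $\Lambda$ as the product $\{\pm\tau_1 n^{(m-1)/m}:n\in\N\}\times\{\pm\tau_2 n^{1/m}:n\in\N\}$ (this is how the Grohs--Liehr square-root lattice is structured, and it is exactly what the proof uses). With that reading the argument is a plain two-pass application of Proposition~1: freeze $\omega=\omega_k$, note that $z\mapsto |V_gf(z,\omega_k)|^2-|V_gh(z,\omega_k)|^2$ extends to an entire function of order $\rho=\frac{m}{m-1}$ and type at most $\tau_z=\frac{m-1}{m}(2\pi)^{m/(m-1)}(ma)^{-1/(m-1)}$ which vanishes on \emph{all} of $\{\pm\tau_1 n^{(m-1)/m}\}_{n\in\N}$; the hypothesis on $\tau_1$ is precisely the bound in Proposition~1, so the section is identically zero. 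Then freeze an arbitrary $z_0$ and repeat in the frequency variable (order $m$, type $\le a$, zeros $\{\pm\tau_2 n^{1/m}\}$, bound on $\tau_2$). This already gives $|V_gf|=|V_gh|$ on all of $\R^2$, and the continuous uniqueness lemma finishes. There is no curve composition and no ``Step~3'' at all; the two thresholds decouple because they are applied in separate passes, not because of a balancing on a single composed function.

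Taken on its own terms, your route has two real holes. First, for general $m>1$ the map $s\mapsto s^{m-1}$ is not entire, so $\Gamma_f$ is only holomorphic on a sector; the uniqueness-set criterion you want to invoke (the paper's Proposition~1) is stated for entire functions with \emph{symmetric} zero sets $\{\pm\lambda_k\}\subset\R$ and does not transfer for free to a one-sided sequence $\{n^{1/m}\}_{n\ge 1}$ on a slit plane --- the Jensen optimisation that produces the factor $e$ uses the full circle. Moreover, after composition the types from the $x$- and $\omega$-directions \emph{add} in the bound for $\Gamma_f$, so a single density condition on $\{n^{1/m}\}$ would have to beat a sum $\tau_z\tau_1^{\rho}+a\tau_2^{m}$ rather than the two separate inequalities in the statement; you do not show that the stated thresholds on $\tau_1,\tau_2$ survive this. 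Second, your Step~3 --- upgrading equality along four complexified curves to equality on an open subset of $\R^2$ --- is, as you yourself flag, the crux, and you give no mechanism to do it; the identity theorem in one complex variable does not see a two-real-dimensional neighbourhood. In the paper's product-set argument this difficulty never appears. Your Step~1 (analytic extension and the order/type computation via the Legendre dual) is correct and matches the paper's preparatory lemmas, but from Step~2 on you are solving a harder problem than the one the paper actually poses.
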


\begin{remark}
    A symmetric result holds for windows with super-exponential decay in the time domain. By the fundamental identity of the Short-time Fourier transform, we have $|V_g f (x, \omega)| = |V_{\hat{g}}\hat{f} (\omega, -x)|$. Consequently, if a window function $g$ satisfies the decay condition:
    \begin{equation*}
        |g(x)| \leq C e^{-a|x|^m}
    \end{equation*}
    for some constants $m>1$, $C>0$, $a \in (1,\infty)$, uniqueness is guaranteed by sampling on the set $\Lambda$ such that: 
    \begin{equation*}
        \Lambda =  \{(\pm\tau_2n^{\frac{1}{m}},\pm\tau_1n^{\frac{m-1}{m}}),n \in \N\}
    \end{equation*} where $\tau_1$ and $\tau_2$ satisfy the bounds in Theorem 1.
\end{remark}
\begin{figure}[h!]
    \centering
    \includegraphics[width=0.7\textwidth]{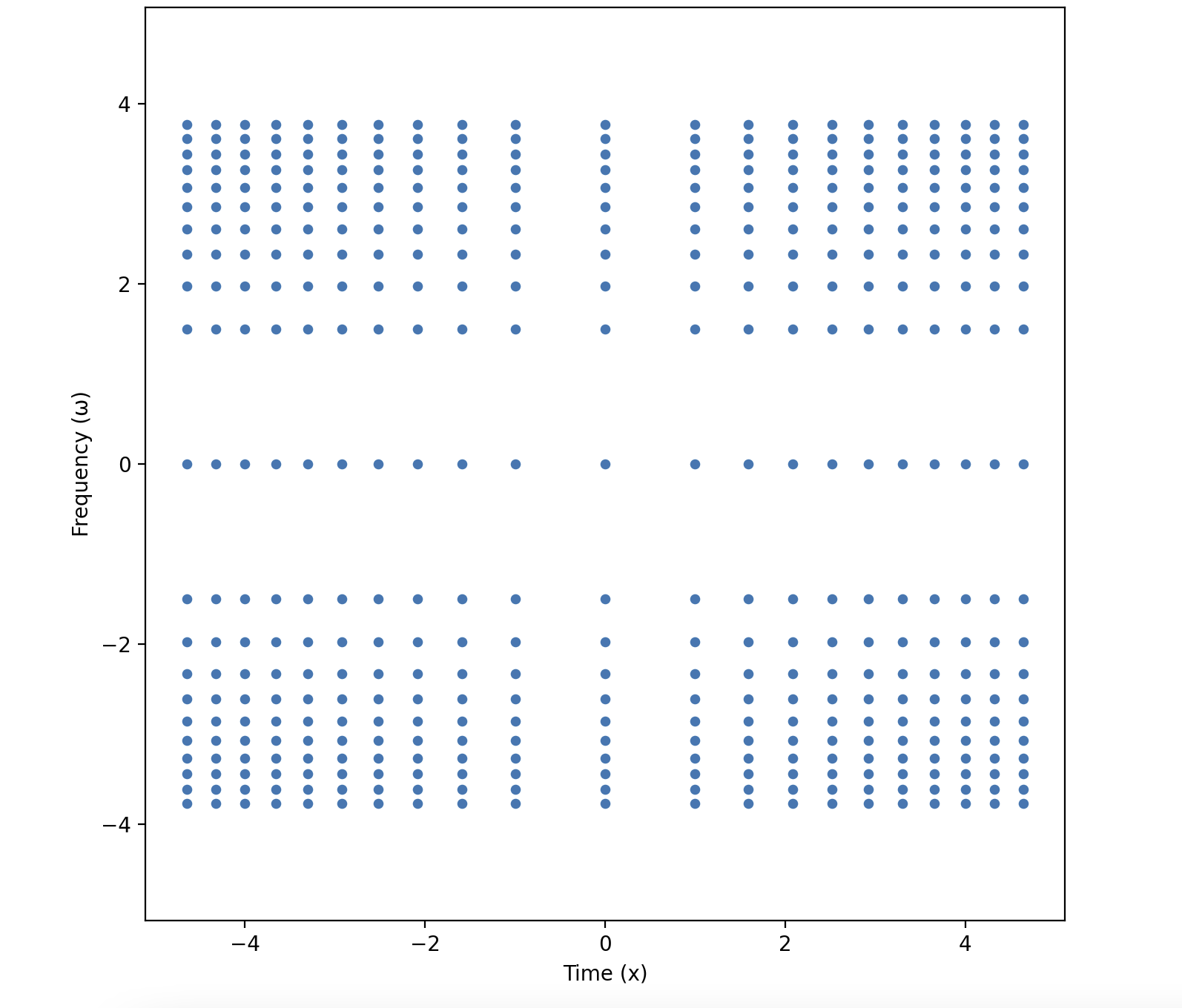}
    \caption{An illustration of sampling points $\Lambda$ in time-frequency plane for $g$ with Fourier decay parameter $m=\frac{3}{2}$ and $a=1$}
\end{figure}
The rest of the paper is organized as follows: In Section 2 we introduce some necessary preliminaries. Subsequently we prove Theorem~\ref{MainTheorem} in Section 3. To this end, we first establish a sufficient condition for a discrete set to be a uniqueness set for the relevant class of entire functions (Proposition 1) and use Proposition 2 to show that our result is in a sense "sharp". We then use these technical results to prove Theorem~\ref{MainTheorem}, which provides a concrete discrete sampling set $\Lambda$ that guarantees the unique recovery of $f$ up to a global phase.

\section{Preliminary}
\label{sec:2}
A central principle in Fourier analysis is the connection between the smoothness of a function and the decay rate of its Fourier transform at infinity. The Paley-Wiener theorem and its variants are the primary tools that precisely characterize this relationship, providing the foundation for extending functions into the complex domain based on the behavior of their Fourier transforms.  
\subsection{Paley-Wiener Type Theorem}
The Fourier transform of a Lebesgue integrable function $f$ on $\R$ is given by 
\begin{equation*}
    \mathcal{F}(f):=\hat{f} (\xi) = \int_{\R} f(t) e^{-2 \pi i t \cdot\xi} dt, \quad \xi \in \R.
\end{equation*} 

A standard density argument can be used to extend this definition so that  $\mathcal{F}$ is an isometry on $L^2(\R)$. The Fourier transform of a function $f \in L^2(\R)$ is said to be of \emph{super-exponential decay} if
\begin{equation}\label{FourierDecay}
    |\hat{f}(\xi)| \lesssim Ce^{-a|\xi|^m}    
\end{equation}
for some $m>1$, $C>0$ and $a \in (1,\infty)$. In this subsection we introduce a slightly modified version of Paley-Wiener theorem. In particular, we show that if the Fourier transform of a function $f$ decays super-exponential, then $f$ can 
be extended to an entire function whose order and type depend on the parameters $a$ and $m$. To establish this result, we make use of Morera’s Theorem, which we now recall. 
\begin{mainthm}\cite[Theorem 5.2]{stein2010complex}\label{Morera}
 If $\{ f_n \}_{n \in \N}$ is a sequence of holomorphic functions that converges uniformly to a function $f$ in every compact subset of $\Omega$, then $f$ is holomorphic in $\Omega$.
\end{mainthm}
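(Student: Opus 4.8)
The plan is to verify the hypotheses of the classical Morera theorem (in its triangle-integral form) for the limit function $f$, exploiting the fact that holomorphy is a \emph{local} property so that the whole argument can be localized to small disks contained in $\Omega$. First I would check that $f$ is continuous on $\Omega$: fixing any $z_0 \in \Omega$ and a closed disk $\overline{D} \subset \Omega$ centered at $z_0$, the compactness of $\overline{D}$ together with the assumed uniform convergence of $\{f_n\}$ on compact subsets forces $f_n \to f$ uniformly on $\overline{D}$; since each $f_n$ is continuous and a uniform limit of continuous functions is continuous, $f$ is continuous on $\overline{D}$, and hence on all of $\Omega$.

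Next, recalling that Morera's theorem asserts that a continuous function on a disk is holomorphic provided its integral over the boundary of every triangle contained in the disk vanishes, I would reduce the problem to showing $\int_T f(z)\,dz = 0$ for each triangle $T$ lying in $\Omega$. Because each $f_n$ is holomorphic, the Cauchy--Goursat theorem gives $\int_T f_n(z)\,dz = 0$ for every $n$. The contour $T$ is a compact subset of $\Omega$, so by hypothesis $f_n \to f$ uniformly on $T$, and this uniform convergence is exactly what licenses interchanging the limit and the integral, yielding
\begin{equation*}
    \int_T f(z)\,dz = \lim_{n \to \infty} \int_T f_n(z)\,dz = 0.
\end{equation*}
Having established that every triangle integral of $f$ vanishes, I would invoke Morera's theorem to conclude that $f$ is holomorphic on each disk inside $\Omega$, and therefore on $\Omega$ itself.

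The argument presents no serious structural obstacle, but the one point requiring genuine care, which I expect to be the crux, is the passage of the limit inside the integral. This step fails for merely pointwise convergence; it is precisely the hypothesis of \emph{uniform} convergence on compact sets, specialized to the compact contour $T$, that makes it rigorous, via the standard length estimate $\big|\int_T (f_n - f)\,dz\big| \le \mathrm{length}(T)\cdot \sup_{z \in T}|f_n(z) - f(z)| \to 0$. The only other subtlety worth flagging is that Morera's theorem is stated for disks, which is why the reduction to local disks at the outset is essential: holomorphy on every disk in $\Omega$ is equivalent to holomorphy on $\Omega$.
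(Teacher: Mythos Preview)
Your argument is correct and is precisely the standard textbook proof: localize to a disk, use uniform convergence on compacta to get continuity of $f$ and to pass the limit through $\int_T$, then invoke the triangle form of Morera's theorem. Note, however, that the paper does not actually supply a proof of this statement at all---it is simply quoted as Theorem~A with a citation to \cite[Theorem 5.2]{stein2010complex}---so there is nothing in the paper to compare your approach against; what you have written is essentially the proof given in the cited reference.
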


The next result shows that if $f \in L^2(\R)$ satisfies~\eqref{FourierDecay}, then $f$ is entire.
\begin{lemma}\label{entire}
    If a function $f \in L^2(\R)$ has Fourier transform of super-exponential decay, then $f$ can be extended analytically to an entire function.
\end{lemma}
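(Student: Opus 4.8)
The plan is to realize $f$ through the Fourier inversion formula and then analytically continue that integral. First I would note that the decay hypothesis $|\hat f(\xi)| \le C e^{-a|\xi|^m}$ with $m>1$ forces $\hat f \in L^1(\R)$, since $\int_\R e^{-a|\xi|^m}\,d\xi<\infty$. Hence Fourier inversion applies and, after modification on a Lebesgue-null set, $f(t)=\int_\R \hat f(\xi)\,e^{2\pi i t\xi}\,d\xi$ for every $t\in\R$. The natural candidate for the entire extension is therefore
\[
  F(z)=\int_\R \hat f(\xi)\,e^{2\pi i z\xi}\,d\xi,\qquad z\in\C .
\]

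Next I would check absolute convergence of this integral for every $z=x+iy\in\C$. Since $|e^{2\pi i z\xi}|=e^{-2\pi y\xi}\le e^{2\pi |y||\xi|}$, the integrand is dominated by $C\,e^{-a|\xi|^m+2\pi|y||\xi|}$; because $m>1$, the term $a|\xi|^m$ eventually dominates $2\pi|y||\xi|$, so this bound is integrable and $F$ is well defined on all of $\C$. The same estimate is uniform for $z$ ranging over any compact set, on which $|y|$ is bounded — this is exactly what will let us pass to the limit below.

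To conclude that $F$ is entire I would invoke Theorem~\ref{Morera}. Set $F_n(z)=\int_{-n}^{n}\hat f(\xi)\,e^{2\pi i z\xi}\,d\xi$. Each $F_n$ is holomorphic on $\C$: on the compact interval $[-n,n]$ the integrand is continuous in $\xi$ and entire in $z$, so one may differentiate under the integral sign (equivalently, for any triangle $T$ one interchanges $\oint_T$ with $\int_{-n}^{n}$ by Fubini and applies Morera's theorem to the integrand). Then, for $z$ in a fixed compact set $K\subset\C$ on which $|y|\le R$,
\[
  |F(z)-F_n(z)|\le C\int_{|\xi|>n}e^{-a|\xi|^m+2\pi R|\xi|}\,d\xi\longrightarrow 0\quad(n\to\infty),
\]
and this bound is independent of $z\in K$, so $F_n\to F$ uniformly on compact subsets of $\C$. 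By Theorem~\ref{Morera}, $F$ is holomorphic on $\C$, i.e.\ entire, and $F|_\R=f$ almost everywhere, which is the claimed extension.

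The only mildly delicate point is the convergence of the defining integral for all $z\in\C$, which is where the hypothesis $m>1$ is genuinely used — for $m=1$ one would only obtain a function holomorphic in a horizontal strip — together with the routine verification that each truncation $F_n$ is entire; neither step presents a real obstacle.
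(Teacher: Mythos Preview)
Your proposal is correct and follows essentially the same approach as the paper: truncate the inverse Fourier integral to $F_n(z)=\int_{-n}^{n}\hat f(\xi)e^{2\pi i z\xi}\,d\xi$, use the super-exponential decay to get uniform convergence of $F_n\to F$ on compact subsets of $\C$, and conclude via Theorem~\ref{Morera}. Your write-up is in fact slightly more careful than the paper's (you explicitly note $\hat f\in L^1(\R)$ and give the correct pointwise bound $e^{2\pi|y||\xi|}$), and your closing remark about the $m=1$ case matches the paper's Remark following the lemma.
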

For completeness, we include the proof of this result which is based on arguments found in \cite[Chapter 3, Theorem 4.1]{stein1971introduction} and in \cite[Chapter 4, Problem 1]{stein2010complex}.\\

\begin{proof}
 Let $f\in L^2(\R)$ satisfy~\eqref{FourierDecay} for some $m>1, C>0, $ and $a\in (1, \infty)$. The $\hat{f} \in L^2(\R)$ from which we conclude that $\hat{f} \in L^1[-n,n]$ for each $n \in \N$.

    For each $n \in \N$ and $z=x+iy \in \C$, define 

\begin{equation*}
    F_n(z) =F_n(x+iy):= \int_{-n}^n \hat{f}(\xi)e^{2 \pi i \xi z}d\xi.
\end{equation*}
$\{F_n(z)\}_{n\in \N}$ forms a sequence of well-defined entire functions.

Next, for $z=x+iy \in \C$, observe that by~\eqref{FourierDecay} we have that 
$$|\hat{f}(\xi)e^{2 \pi i z \xi}|=|\hat{f}(\xi)e^{2 \pi i (x+iy) \xi}| \leq C' e^{-a|\xi|^m}e^{-2 \pi y |\xi|}.$$

Let $K\subset \C$ be a compact set. For each $z=x+iy\in K$, 
\begin{equation}\label{inverseFourierextension}
    F(z) = F(x+iy) = \int_{\R}\hat{f}(\xi) e^{2 \pi i (x+iy) \xi}d\xi
\end{equation}
is well defined. 

%Let $K\subset \C$ be a compact set. For $z\in K$, $|z|\leq R$ for some $R>0$. As such and because $m>1$, $$\int_{\R}e^{-a|\xi|^m}e^{-2 \pi y |\xi|}d\xi \leq \int_{\R}e^{-a|\xi|^m}e^{-2 \pi R |\xi|}d\xi <\infty. $$

% Since $f$ has Fourier transform of super-exponential decay, for all $\xi \in \R$, there exists an $C'>0$ such that the integrand of above integral is bounded by 
%    $$|\hat{f}(\xi)e^{2 \pi i (x+iy) \xi}| \leq C' e^{-a|\xi|^m}e^{-2 \pi y |\xi|}.$$
%Consider an arbitrary compact subset $K \subset \C$. Since $m>1$, the rapid decay of $e^{-a|\xi|^m}$ ensures ~\eqref{inverseFourierextension} converges absolutely for $x+iy \in K$. 
Moreover, for all $z \in K$, we have
\begin{equation*}
    |F_n(z) - F(z)|\leq C' \int_{|\xi|>n} e^{-a|\xi|^m}e^{-2 \pi y \xi}d\xi\leq C' \int_{|\xi|>n} e^{-a|\xi|^m}e^{-2 \pi R \xi}d\xi .
\end{equation*}
But the last expression tends to $0$ as $n$ approaches $\infty$. 
Hence the sequence $\{ F_n(z) \}$ converges uniformly to $F(z)$ for $z \in K$. Because $K$ is arbitrary, $F(z)$ is entire by Morera's Theorem. In addition, $\lim_{y\to 0} F(x+iy)=\int_{\R}\hat{f}(\xi)e^{2\pi i xi x} d\xi=f(x)$ in the $L^2$ sense. 
%The reason we call $F(x+iy)$ an extension of $f(x)$ is because of the $L^2$-convergence, namely:
%    The proof follows from an argument similar to original Paley-Wiener Theorem. For more details, see \cite[Page 98]{stein1971introduction}}

\end{proof}
\begin{remark} 
    For $m =1$, one can show that $f$ can be analytically extended to $\C$ up to a strip $\mathcal{S}_a$ consisting of complex numbers whose imaginary part is between $a$ and $-a$, but not necessarily entire. For example, consider function $f(x) = \frac{1}{x^2+1}$ which has Fourier transform $\hat{f}(\xi) = - 2\pi i e^{- 2\pi |\xi|}$ decays exponentially, yet its analytic extension $f(z)$ has a pole at $z= \pm i$ and is thus not entire.
\end{remark}

The following lemma is a corollary of the Paley-Wiener Theorem from the relation between $L^2-$ integrability on the real line and entire functions: 
\begin{corollary}\label{ImaginaryLine}
    Assume $f(z)$ is an entire function satisfying $|f(z)| \leq Ce^{A|z|^\rho}$ for some $A,C,\rho>0$ and the restriction of $f$ to the real line is square-integrable. Then $|f(x+iy)| \leq C' e^{-a|x|^{\rho}+b|y|^\rho}$ for some $a,b,C'>0$. 
\end{corollary}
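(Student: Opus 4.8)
\medskip
\noindent\emph{Proof strategy.} The plan is to split the desired estimate into its two exponents and handle them with different tools: the factor $e^{b|y|^{\rho}}$ comes from a Phragm\'en--Lindel\"of argument applied to the radial growth bound $|f(z)|\le Ce^{A|z|^{\rho}}$, while the decay factor $e^{-a|x|^{\rho}}$ along $\R$ has to be squeezed out of the hypothesis $f|_{\R}\in L^{2}(\R)$ by passing to the Fourier transform.

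First I would record that $f$ is bounded on $\R$. Since $|f|^{2}$ is subharmonic, averaging it over a disc of fixed radius centered at a real point $x$ and using $|f(z)|\le Ce^{A|z|^{\rho}}$ to control the contribution of the part of the disc off $\R$, together with $\int_{\R}|f|^{2}<\infty$, bounds $\sup_{x\in\R}|f(x)|$; this is the analogue, for entire functions of finite order $\rho$, of the classical Plancherel--P\'olya inequality. Next, working in the closed upper half-plane (the lower half-plane is handled by applying the same reasoning to $\overline{f(\bar z)}$), I would subdivide it into finitely many sectors of opening strictly less than $\pi/\rho$ and apply Phragm\'en--Lindel\"of in each, comparing $f$ against functions of the form $\exp\big(\varepsilon e^{-i\rho\theta_{0}}z^{\rho}\big)$: on the real boundary $f$ is bounded, inside it grows no faster than $e^{A|z|^{\rho}}$, and the conclusion is $|f(x+iy)|\le C'e^{b|y|^{\rho}}$ for all $x+iy\in\C$, with $b$ depending only on $A$ and $\rho$.

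The third step, which is the heart of the matter, is to convert square-integrability on $\R$ into super-exponential decay there. Using the bound $|f(x+iy)|\le C'e^{b|y|^{\rho}}$ just obtained and $f|_{\R}\in L^{2}$, Cauchy's theorem lets me shift the contour in $\hat f(\xi)=\int_{\R}f(t)e^{-2\pi i t\xi}\,dt$ onto a horizontal line at height $y$, yielding $|\hat f(\xi)|\lesssim e^{-2\pi\xi y+b|y|^{\rho}}$; optimizing over $y$ is a Legendre-transform computation whose value scales like $|\xi|^{\rho/(\rho-1)}$, so $\hat f$ decays super-exponentially. One then checks, along the lines of Lemma~\ref{entire} and Morera's Theorem, that this forces the corresponding holomorphic/decay structure and, reading the inverse transform back, that $|f(x)|\le C''e^{-a|x|^{\rho}}$ on $\R$; combining this with the second step produces the two-sided bound $|f(x+iy)|\le C'e^{-a|x|^{\rho}+b|y|^{\rho}}$.

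I expect this third step to be the main obstacle: Phragm\'en--Lindel\"of by itself yields only boundedness, not decay, on $\R$, so the $L^{2}$ hypothesis must genuinely be used through the Fourier transform, and one must make the conjugate exponents $\rho$ and $\rho/(\rho-1)$ match up, justify the contour shifts (dominated convergence together with the growth control, possibly after first passing to a dense subclass for which $f(\cdot+iy)\in L^{1}$), and be certain the reverse Paley--Wiener step is legitimate before drawing any pointwise conclusion about $f$. A secondary but necessary point is the preliminary boundedness on $\R$: one has to verify that the subharmonicity estimate really does close even though $f$ is only of finite order $\rho>1$ rather than of exponential type.
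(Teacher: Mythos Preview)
Your outline is more careful than the paper's own argument, which simply notes that $f$ is bounded on $\R$ (via the claim that an $L^{2}$ function tends to zero at infinity, itself not valid without invoking analyticity) and then defers everything else to ``a standard Phragm\'en--Lindel\"of principle argument.'' You correctly identify that Phragm\'en--Lindel\"of from a bounded real trace yields only $|f(x+iy)|\le C'e^{b|y|^{\rho}}$ and that the factor $e^{-a|x|^{\rho}}$ would have to be produced by some separate mechanism; the paper does not address this at all.

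The gap lies in your third step, and it cannot be closed because the statement as written is false. Super-exponential decay of $\hat f$ controls the \emph{analytic extension} of $f$, not its decay on $\R$: Fourier inversion gives only $|f(x)|\le\|\hat f\|_{1}$, so ``reading the inverse transform back'' returns boundedness, never $|f(x)|\lesssim e^{-a|x|^{\rho}}$. A concrete counterexample is $f(z)=\bigl(\tfrac{\sin\pi z}{\pi z}\bigr)^{2}$. It is entire of exponential type, hence satisfies $|f(z)|\le Ce^{A|z|^{\rho}}$ for every $\rho\ge 1$ with suitable $A,C$; its restriction $\operatorname{sinc}^{2}$ lies in $L^{2}(\R)$; its Fourier transform is compactly supported and therefore decays faster than any $e^{-c|\xi|^{m}}$; yet $|f(n+\tfrac12)|=\pi^{-2}(n+\tfrac12)^{-2}$, so no estimate $|f(x)|\le C'e^{-a|x|^{\rho}}$ with $a>0$ can hold. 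Thus neither your Fourier route nor the paper's Phragm\'en--Lindel\"of citation can deliver the conclusion; the Corollary needs either an additional hypothesis on $f$ or a weakening to $a=0$, which is exactly what Phragm\'en--Lindel\"of actually gives.
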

\begin{proof}
    Notice that since $f(z)$ is entire, it has no poles for finite $z \in \C$ and since $f(x)$ is square-integrable on the real line, it holds that $\lim_{|x|\to \infty}f(x) \to 0$. Therefore $f(x)$ is bounded on the real line. The rest of the proof follows from a standard Phragmén–Lindelöf principle argument, see \cite[Page 124]{stein2010complex}.
\end{proof}
A natural question to investigate regarding an entire function is its order and type. Using various tools in complex analysis, the order and type of an entire function help to deduce various sampling theorems \cite{marco2003interpolating}. An entire function $f(z)$ is said to be of order $\rho \geq 0$ if 
\begin{equation}\label{OrderBound}
    \rho = \limsup_{r \to \infty} \frac{\log \log M_f(r)}{\log r} \geq 0, \quad \text{where}\; M_f(r) = \max_{|z|=r}|f(z)|.   
\end{equation}
Let $\rho$ be the order of an entire function $f$. The function is said to have a finite \textit{type} with respect to $\rho$ if for some $A,K>0$ the inequality
$$M_f(r) < Ke^{Ar^{\rho}}$$
is fulfilled. The greatest lower bound for those values of $A$ for which the inequality if fulfilled is called the \textit{type} $\tau$ corresponding to order $\rho$ of the function $f$. To help with calculating order and type of a given entire function, we introduce the following theorem:
\begin{mainthm}\cite[Theorem 2 and 3 in Lecture 1]{levin1996lectures}
    For an entire function $f(z) = \sum_{n=0}^{\infty}c_n z^n$, the order of the entire function is determined by the formula
    \begin{equation}\label{OrderCalculation}
        \rho = \limsup_{n \to \infty} \frac{n \log n}{\log (1/|c_n|)}.
    \end{equation}
    and the type $\tau$ with respect to order $\rho$ is determined by
    \begin{equation}\label{TypeCalculation}
        \tau = \frac{1}{e\rho} \limsup_{n \to\infty} (n \sqrt[n]{|c_n|^{\rho}}).
    \end{equation}
\end{mainthm}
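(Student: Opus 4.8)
The plan is to establish both identities by the classical method of playing the Cauchy estimates against the trivial triangle-inequality bound for $M_f(r)$, in each case optimizing over the free radius $r$. On one side, Cauchy's inequality $|c_n|\le M_f(r)\,r^{-n}$ (valid for every $r>0$) turns an upper bound on $M_f$ into a lower bound on $\log(1/|c_n|)$, hence into an upper bound for the right-hand sides of \eqref{OrderCalculation} and \eqref{TypeCalculation}. On the other side, $M_f(r)\le\sum_{n\ge0}|c_n|r^n$, estimated by isolating the dominant term of the series, converts a bound on the coefficients into a bound on $M_f$, giving the reverse inequalities. Each optimization over $r$ is a one-variable calculus exercise, so the whole proof comes down to tracking constants carefully. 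Throughout we assume $0<\rho<\infty$, the only case in which \eqref{TypeCalculation} is meaningful; the degenerate cases $\rho\in\{0,\infty\}$ follow from the same estimates.

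For \eqref{OrderCalculation}, put $\mu:=\limsup_{n\to\infty}\frac{n\log n}{\log(1/|c_n|)}$. To see $\mu\le\rho$: for $\varepsilon>0$ the definition of order gives $M_f(r)\le e^{r^{\rho+\varepsilon}}$ for $r$ large, so $|c_n|\le e^{r^{\rho+\varepsilon}}r^{-n}$; minimizing over $r$, with minimizer $r=(n/(\rho+\varepsilon))^{1/(\rho+\varepsilon)}$, yields $\log(1/|c_n|)\ge\frac{n}{\rho+\varepsilon}(\log n+O(1))$, whence $\mu\le\rho+\varepsilon$. For the reverse bound $\rho\le\mu$ (assuming $\mu<\infty$): the hypothesis gives $|c_n|\le n^{-n/(\mu+\varepsilon)}$ for $n$ large, so $M_f(r)\le\sum_n n^{-n/(\mu+\varepsilon)}r^n$ up to an additive polynomial in $r$; the summand $\exp(n\log r-\frac{n}{\mu+\varepsilon}\log n)$ is largest for $n$ of order $r^{\mu+\varepsilon}$, and splitting the sum into the range $n\lesssim r^{\mu+\varepsilon}$ (polynomially many terms, each bounded by the maximal one) and a geometrically convergent tail gives $M_f(r)\le C e^{Kr^{\mu+\varepsilon}}$, so $\rho\le\mu+\varepsilon$. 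Letting $\varepsilon\downarrow0$ gives $\rho=\mu$.

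The type formula follows the same pattern, but now the constants must be kept exactly. Put $\nu:=\limsup_{n\to\infty}n|c_n|^{\rho/n}$. From $M_f(r)\le Ke^{(\tau+\varepsilon)r^\rho}$ and Cauchy's inequality, $n|c_n|^{\rho/n}\le nK^{\rho/n}r^{-\rho}e^{(\tau+\varepsilon)\rho r^\rho/n}$; minimizing over $s=r^\rho$, with minimizer $s=n/((\tau+\varepsilon)\rho)$, gives the value $e\rho(\tau+\varepsilon)K^{\rho/n}\to e\rho(\tau+\varepsilon)$, hence $\nu\le e\rho\tau$. Conversely, from $|c_n|\le((\nu+\varepsilon)/n)^{n/\rho}$ for $n$ large we get $M_f(r)\le\sum_n\bigl(\frac{(\nu+\varepsilon)r^\rho}{n}\bigr)^{n/\rho}$ up to an additive polynomial in $r$; the summand is maximized at $n=(\nu+\varepsilon)r^\rho/e$, with maximal value $\exp\!\bigl(\frac{(\nu+\varepsilon)r^\rho}{e\rho}\bigr)$, and the same split-the-series estimate gives $M_f(r)\le C\,r^\rho\exp\!\bigl(\frac{(\nu+\varepsilon)r^\rho}{e\rho}\bigr)$; the polynomial factor $r^\rho$ is absorbed into an arbitrarily small increase of the exponent, so $\tau\le(\nu+\varepsilon)/(e\rho)$. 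Letting $\varepsilon\downarrow0$ yields $\tau=\nu/(e\rho)$, which is \eqref{TypeCalculation}.

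I expect the genuine difficulty to lie entirely in the second (lower) direction: showing that $\sum_n|c_n|r^n$ is comparable, up to a subexponential factor in $r$, to its largest term, and --- this is the crux for the type --- that the resulting polynomial prefactor perturbs neither the growth order $r^\rho$ nor the constant $1/(e\rho)$ in front of it. For the order this bookkeeping is irrelevant; for the type it is exactly where the sharp constant could be lost if one is careless, so the split into a ``bulk'' of polynomially many terms and an exponentially small tail must be carried out with the dominant term located precisely.
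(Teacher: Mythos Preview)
The paper does not prove this theorem at all: it is stated with the citation \cite[Theorem 2 and 3 in Lecture 1]{levin1996lectures} and used as a black box in the subsequent Lemma~\ref{OrderType}. So there is no ``paper's own proof'' to compare against.

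That said, your proposal is exactly the classical argument that appears in Levin (and in Boas \cite{boas2011entire}): Cauchy's inequality $|c_n|\le M_f(r)r^{-n}$ optimized in $r$ for one direction, and $M_f(r)\le\sum_n|c_n|r^n$ dominated by its largest term (up to a polynomial count of contributing terms and a geometric tail) for the other. Your identification of the minimizers $r=(n/(\rho+\varepsilon))^{1/(\rho+\varepsilon)}$ and $r^\rho=n/((\tau+\varepsilon)\rho)$, and of the maximizing index $n\approx(\nu+\varepsilon)r^\rho/e$ in the series, is correct, and you are right that the only place requiring care is checking that the polynomial prefactor $r^\rho$ coming from the number of ``bulk'' terms does not shift the type constant --- it does not, since $r^\rho=o(e^{\delta r^\rho})$ for every $\delta>0$. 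Had the paper included a proof, it would have been this one.
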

Our next lemma gives an explicit calculation for the order and type of the entire function ~\eqref{inverseFourierextension}.

\begin{lemma}\label{OrderType}
Let $f \in L^2(\mathbb{R})$ be a function whose Fourier transform satisfies $|\hat{f}(\xi)| \le C e^{-a|\xi|^m}$ for some $m>1$ and $a>0$. Then its analytic continuation $f(z)$ is an entire function of type at most $\tau = \frac{m-1}{m}(2\pi)^{m/(m-1)}(am)^{-1/(m-1)}$ corresponding to maximum possible order $\rho = \frac{m}{m-1}$.
\end{lemma}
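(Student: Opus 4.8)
The plan is to work directly with the Taylor expansion of $f(z)$ about the origin and then apply the coefficient formulas \eqref{OrderCalculation} and \eqref{TypeCalculation}. Starting from the representation \eqref{inverseFourierextension}, i.e. $f(z)=\int_{\R}\hat f(\xi)e^{2\pi i\xi z}\,d\xi$, I would expand the exponential as $e^{2\pi i\xi z}=\sum_{n\ge0}\frac{(2\pi i\xi z)^n}{n!}$ and interchange summation and integration. This interchange is justified by dominated convergence: the partial sums are dominated by $|\hat f(\xi)|e^{2\pi|\xi||z|}$, which, by the hypothesis $|\hat f(\xi)|\le Ce^{-a|\xi|^m}$ with $m>1$, lies in $L^1(\R)$ for each fixed $z$ (and analyticity of $f$ is already supplied by Lemma \ref{entire}). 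The result is $f(z)=\sum_{n\ge0}c_nz^n$ with
\begin{equation*}
c_n=\frac{(2\pi i)^n}{n!}\int_{\R}\hat f(\xi)\,\xi^n\,d\xi .
\end{equation*}

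The next step is to estimate $|c_n|$. Using $|\hat f(\xi)|\le Ce^{-a|\xi|^m}$ together with the symmetry of the majorant, $|c_n|\le\frac{(2\pi)^n C}{n!}\int_{\R}|\xi|^n e^{-a|\xi|^m}\,d\xi=\frac{2C(2\pi)^n}{n!}\int_0^\infty\xi^n e^{-a\xi^m}\,d\xi$. The substitution $t=a\xi^m$ identifies the last integral with a Gamma value, $\int_0^\infty\xi^n e^{-a\xi^m}\,d\xi=\frac{1}{m}a^{-(n+1)/m}\Gamma\big(\frac{n+1}{m}\big)$, so that
\begin{equation*}
|c_n|\ \le\ \frac{2C}{m}\,\frac{(2\pi)^n}{n!}\,a^{-(n+1)/m}\,\Gamma\Big(\frac{n+1}{m}\Big)\ =:\ b_n .
\end{equation*}

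Finally I would insert $b_n$ into \eqref{OrderCalculation} and \eqref{TypeCalculation}. Applying Stirling's formula to both $n!=\Gamma(n+1)$ and $\Gamma(\frac{n+1}{m})$ gives an asymptotic expansion $\log(1/b_n)=\frac{m-1}{m}\,n\log n-\gamma\, n+O(\log n)$, where the $-n\log n$ coming from $n!$ and the $+\frac{1}{m}n\log n$ coming from $\Gamma(\frac{n+1}{m})$ combine to the coefficient $\frac{m-1}{m}$, and the linear term has the explicit constant $\gamma=\frac{m-1}{m}-\frac{1}{m}\log(am)+\log(2\pi)$. Since $|c_n|\le b_n$, formula \eqref{OrderCalculation} yields $\rho\le\limsup_n\frac{n\log n}{\log(1/b_n)}=\frac{m}{m-1}$; should this be a strict inequality, $f$ has order below $\frac{m}{m-1}$ and hence type $0$ with respect to $\frac{m}{m-1}$, so it suffices to treat the case $\rho=\frac{m}{m-1}$. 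Then $\frac{\rho}{n}\log b_n=-\log n+\frac{m}{m-1}\gamma+o(1)$, whence $n|c_n|^{\rho/n}\le n\,b_n^{\rho/n}\to e^{\frac{m}{m-1}\gamma}=e\,(2\pi)^{m/(m-1)}(am)^{-1/(m-1)}$, and \eqref{TypeCalculation} gives $\tau\le\frac{1}{e\rho}\,e^{\frac{m}{m-1}\gamma}=\frac{m-1}{m}(2\pi)^{m/(m-1)}(am)^{-1/(m-1)}$, which is the asserted value. The main obstacle is the careful Stirling bookkeeping needed to pin down the constant $\gamma$ exactly: only this linear-in-$n$ term survives the normalization $n^{-1}\log(\cdot)$ and, after exponentiation, it alone fixes the numerical value of the type; a minor additional point is the monotonicity argument letting one pass from the bound $|c_n|\le b_n$ to upper bounds on both the order and the type.
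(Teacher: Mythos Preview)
Your proposal is correct and follows essentially the same route as the paper: both derive the Taylor coefficients from the inverse Fourier representation, bound $|c_n|$ via the Gamma integral $\int_0^\infty \xi^n e^{-a\xi^m}d\xi=\frac{1}{m}a^{-(n+1)/m}\Gamma(\frac{n+1}{m})$, apply Stirling to both $n!$ and $\Gamma(\frac{n+1}{m})$, and then feed the resulting asymptotics into \eqref{OrderCalculation} and \eqref{TypeCalculation}. Your version is a bit more streamlined (you package the Stirling bookkeeping into a single expansion $\log(1/b_n)=\frac{m-1}{m}n\log n-\gamma n+O(\log n)$) and is slightly more careful about the monotonicity step $|c_n|\le b_n\Rightarrow$ upper bounds on $\rho$ and $\tau$, but the underlying argument is identical.
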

\begin{proof}
The analytic continuation of the function, $f(z)$, is given by the inverse Fourier transform:
\begin{equation*}
    f(z) = \int_{-\infty}^\infty \hat{f}(\xi) e^{2\pi i z \xi} d\xi
\end{equation*}
The Taylor coefficients of $f(z)$ are given by 
%$c_n = \frac{f^{(n)}(0)}{n!}$\textcolor{red}{, which can be found as}
\begin{equation*}
c_n = \frac{f^{(n)}(0)}{n!} = \frac{(2\pi i)^n}{n!} \int_{-\infty}^\infty \xi^n \hat{f}(\xi) d\xi
\end{equation*}
We can now bound the magnitude of the coefficients using the given decay of $\hat{f}(\xi)$:
\begin{equation*}
|c_n| \le \frac{(2\pi)^n}{n!} \int_{-\infty}^\infty |\xi|^n |\hat{f}(\xi)| d\xi \le \frac{C(2\pi)^n}{n!} \int_{-\infty}^\infty |\xi|^n e^{-a|\xi|^m} d\xi
\end{equation*}
The integral can be evaluated using the Gamma function:
\begin{equation*}
    \int_{-\infty}^\infty |\xi|^n e^{-a|\xi|^m} d\xi = \frac{2}{m a^{(n+1)/m}} \Gamma\left(\frac{n+1}{m}\right). 
\end{equation*}
Using Stirling's formula for Gamma function, we obtain 
\begin{equation*}
 \Gamma(\frac{n+1}{m}) = \sqrt{\frac{2 \pi m}{n+1}}(\frac{n+1}{me})^{(n+1)/m}\big(1+ O(\frac{m}{n+1})\big).
\end{equation*}
and 
\begin{equation*}
    n! \approx \sqrt{2 \pi n}(\frac{n}{e})^n
\end{equation*}
Then, for some $C' >0$, we derive the following expression:
         \begin{align}\label{GeneralTaylorCoefficient}
              |c_n|
             &\lesssim \Big((2\pi)^n \cdot n^{-\frac{1}{2}-n}\cdot e^{n -\frac{n+1}{m}}\cdot a^{-\frac{n+1}{m}}\cdot m ^{-\frac{1}{2}-\frac{n+1}{m}}\cdot (n+1)^{\frac{n+1}{m}-\frac{1}{2}}\Big)\cdot\Big(1 + \frac{C'm}{n+1}\Big) \\
               &=  R(a,n,m)\cdot n^{-n}\cdot(n+1)^{\frac{n+1}{m}}\cdot\Big(1 + \frac{C'm}{n+1}\Big) \nonumber
        \end{align}
The term $R(a,n,m)$ collects all remaining exponential and polynomial factors. It follows that:
\begin{align*}
    \rho &= \limsup_{n\to\infty} \frac{n \log n}{-\log|c_n|} \lesssim \limsup_{n\to\infty} \frac{n\log n}{-\log R(a,m,n) + n\log n -\frac{n+1}{m}\log(n+1)- \log(1+\frac{C'm}{n+1})}\\
    &\lesssim \limsup_{n\to\infty} \frac{1}{\frac{-\log R(a,m,n)}{n\log n} + 1 -\frac{n+1}{nm}\frac{\log(n+1)}{\log n}- \frac{\log(1+\frac{C'm}{n+1})}{n\log n}}.
\end{align*}
A series of computations shows that $$\begin{cases} \lim_{n\to \infty}\frac{\log R(a,m,n)}{n\log n} =0\\ \lim_{n\to \infty}\frac{\log(1+ \frac{C'm}{n+1})}{n\log n}=0\end{cases}.$$  Therefore, the remaining terms are:
\begin{equation*}   
    \rho \leq \limsup_{n\to \infty}\frac{1}{1-\frac{n+1}{mn}\frac{\log(n+1)}{\log n}}= \frac{1}{1- \frac{1}{m}} = \frac{m}{m-1}.
\end{equation*}
Let $\tilde{\rho} = \frac{m}{m-1}$, and choose $A,R >0$ such that 
\begin{equation}\label{TypeWRTorder}
    \max_{|z|= r}|f(z)| < Re^{Ar^{\tilde{\rho}}}.    
\end{equation}
Now we compute the type $\tau$ of $f(z)$ with respect to order $\tilde{\rho}$, i.e., the greatest lower bound of $A$ in Equation~\eqref{TypeWRTorder}. We note that 

$$ |c_n|^{\tilde{\rho}/n} \lesssim \Big(n^{-\frac{1}{2}-n}(n+1)^{\frac{n+1}{m}-\frac{1}{2}}\Big)^{\tilde{\rho}/n}\, (e^{n - \frac{n+1}{m}})^{\tilde{\rho}/n}\, a^{-\frac{n+1}{m}\cdot \frac{\tilde{\rho}}{n}} \, m ^{(-\frac{1}{2}-\frac{n+1}{m})\cdot \frac{\tilde{\rho}}{n}}\, \Big(1 + \frac{C'm}{n+1}\Big)^{\tilde{\rho}/n} $$ But,

$$\Big(n^{-\frac{1}{2}-n}(n+1)^{\frac{n+1}{m}-\frac{1}{2}}\Big)^{\tilde{\rho}/n} = n^{-\frac{\tilde{\rho}}{2n}-\tilde{\rho}}(n+1)^{\frac{(n+1)\tilde{\rho}}{mn}-\frac{\tilde{\rho}}{2n}}=\Big(n^{-\tilde{\rho}}(n+1)^{\frac{(n+1)\tilde{\rho}}{mn}}\Big)\Big( n(n+1) \Big)^{^{-\frac{\tilde{\rho}}{2n}}}$$
Thus,

$$n|c_n|^{\tilde{\rho}/n}\lesssim  \Big(n^{1-\tilde{\rho}}(n+1)^{\frac{(n+1)\tilde{\rho}}{mn}}\Big)\Big( n(n+1) \Big)^{^{-\frac{\tilde{\rho}}{2n}}}(e^{n - \frac{n+1}{m}})^{\tilde{\rho}/n}\, a^{-\frac{n+1}{m}\cdot \frac{\tilde{\rho}}{n}} \, m ^{(-\frac{1}{2}-\frac{n+1}{m})\cdot \frac{\tilde{\rho}}{n}}\Big(1 + \frac{C'm}{n+1}\Big)^{\tilde{\rho}/n}$$

However, we see that 
$$\begin{cases}  \lim_{n\to \infty}(e^{n - \frac{n+1}{m}})^{\tilde{\rho}/n} = e^{\tilde{\rho}- \frac{\tilde{\rho}}{m}} = e^{\frac{m}{m-1} - \frac{1}{m-1}}=e\\
\lim_{n\to \infty} a^{-\frac{n+1}{m}\cdot \frac{\tilde{\rho}}{n}} = a^{-\tilde{\rho}/m}\\\lim_{n\to\infty}m ^{(-\frac{1}{2}-\frac{n+1}{m})\cdot \frac{\tilde{\rho}}{n}} = m^{-\tilde{\rho}/m}\\
\lim_{n\to \infty}\Big(1 + \frac{C'm}{n+1}\Big)^{\tilde{\rho}/n} =1\end{cases}.$$ Moreover,

$$\lim_{n\to \infty}\Big( n(n+1) \Big)^{^{-\frac{\tilde{\rho}}{2n}}} = 1 \;\quad \text{and} \lim_{n\to \infty}\Big(n^{1-\tilde{\rho}}(n+1)^{\frac{(n+1)\tilde{\rho}}{mn}}\Big)=\lim_{n\to \infty} n^{1-\tilde{\rho}+\frac{\tilde{\rho}}{m}}= 1$$
Therefore, using Equations~\eqref{TypeCalculation} and~\eqref{GeneralTaylorCoefficient}, combining the computations above we obtain: 
\begin{align*}
    \tau = \frac{1}{e\tilde{\rho}} \limsup_{n \to\infty} (n \sqrt[n]{|c_n|^{\tilde{\rho}}}) &= \frac{1}{e\tilde{\rho}}\lim_{n\to\infty}n\cdot(2\pi)^{\tilde{\rho}}\cdot a^{-\tilde{\rho}/m}\cdot m^{-\tilde{\rho}/m}\cdot n^{-1}\cdot e\\ 
    &= \frac{1}{\tilde{\rho}}  (2\pi)^{\tilde{\rho}}\cdot a^{-\tilde{\rho}/m}\cdot m^{-\tilde{\rho}/m}\\
    &= \frac{m-1}{m}(2\pi)^{m/(m-1)}(am)^{-1/(m-1)}.
\end{align*}
\end{proof}

Next, we introduce some necessary results of general theory of the growth of entire functions. For an entire function $f$, we define $n(r): [0,r) \to \N$ the number of zeros of $f$ within the region $|z|\leq r$. Jensen's formula describes the relation between zeros and an entire function to its growth.
\begin{mainthm}\cite[Formule 7]{jensen1899nouvel}\label{Jensen}
    Suppose $f$ is an entire function satisfying $f(0) \neq 0$. If $z_1, ..., z_n$ are the zeros of $f$ with $|z_j| \leq r$ (counting multiplicities), then 
    \begin{equation*}
        \frac{1}{2\pi} \int_0^{2\pi}\log|f(re^{i \theta})| d\theta  =\log|f(0)| + \sum_{j=1}^n \log(\frac{r}{|z_j|}) = \log|f(0)| + \int_0^r \frac{n(t)}{t}dt.
    \end{equation*}
\end{mainthm}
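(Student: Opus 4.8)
The plan is to prove Jensen's formula in two stages: first I would establish the mean-value identity in the special case that $f$ has no zeros on the closed disk $\overline{D(0,r)}$, then reduce the general case to it by dividing out Blaschke-type factors that remove the zeros while leaving $|f|$ unchanged on the circle $|z|=r$; finally I would rewrite the resulting sum over zeros as the integral of the counting function $n(t)$ by a short Tonelli argument. Note that since $f$ is entire and $f(0)\neq 0$, $f$ is not identically zero, so its zeros are isolated and only finitely many of them lie in $\overline{D(0,r)}$.

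For the zero-free case: if $f$ does not vanish on a neighborhood of $\overline{D(0,r)}$, then that neighborhood carries a holomorphic branch of $\log f$, so $\log|f| = \mathrm{Re}\,\log f$ is harmonic there, and the mean value property of harmonic functions gives $\frac{1}{2\pi}\int_0^{2\pi}\log|f(re^{i\theta})|\,d\theta = \log|f(0)|$ --- exactly the claimed identity with an empty sum. For the general case I would first assume no zero lies on $\{|z|=r\}$ and list the zeros inside the disk as $z_1,\dots,z_n$ (with multiplicity), each satisfying $0<|z_j|<r$ since $f(0)\neq 0$. Set
$$
g(z) \;=\; f(z)\prod_{j=1}^{n}\frac{r^{2}-\overline{z_j}\,z}{r\,(z-z_j)}.
$$
Each factor is holomorphic and nonvanishing near $\overline{D(0,r)}$ apart from cancelling the zero of $f$ at $z_j$ (its only pole sits at $r^{2}/\overline{z_j}$, of modulus $r^{2}/|z_j|>r$), so $g$ is holomorphic and zero-free there; and since $z\overline z=r^{2}$ on the circle, $r^{2}-\overline{z_j}z = z\,\overline{(z-z_j)}$, whence $\bigl|(r^{2}-\overline{z_j}z)/(r(z-z_j))\bigr|=1$ and $|g|=|f|$ on $\{|z|=r\}$. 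Applying the zero-free case to $g$ and computing $g(0)=f(0)\prod_j \bigl(r^{2}/(-r z_j)\bigr)$ then yields
$$
\frac{1}{2\pi}\int_0^{2\pi}\log|f(re^{i\theta})|\,d\theta \;=\; \log|g(0)| \;=\; \log|f(0)| + \sum_{j=1}^{n}\log\frac{r}{|z_j|},
$$
the first asserted equality. For the last equality, ordering $|z_1|\le\cdots\le|z_n|\le r$ so that $n(t)=\sum_{j=1}^{n}\mathbf{1}_{[|z_j|,\infty)}(t)$, Tonelli gives $\int_0^{r}\frac{n(t)}{t}\,dt = \sum_{j=1}^{n}\int_{|z_j|}^{r}\frac{dt}{t} = \sum_{j=1}^{n}\log\frac{r}{|z_j|}$.

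The hard part will be the boundary case, when $f$ has zeros on the circle $|z|=r$: the Blaschke product then acquires a genuine pole on the contour, so the argument above does not apply directly. I would handle this by first checking that $\theta\mapsto\log|f(re^{i\theta})|$ stays integrable --- its only singularities come from factors $\log|z-z_0|$ with $z_0$ on the circle, which are integrable against $d\theta$ --- and then passing to the limit $r'\uparrow r$ through radii that avoid the (isolated) zero set of $f$, using that both sides of the formula depend continuously on the radius. Everything else is routine complex analysis.
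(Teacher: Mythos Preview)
Your proof is correct and is the standard textbook argument for Jensen's formula (Blaschke factors to remove interior zeros, harmonic mean-value for the resulting zero-free function, then the Tonelli rewrite of the sum as $\int_0^r n(t)/t\,dt$; see, e.g., Stein--Shakarchi). The paper does not supply its own proof of this statement: it is quoted as a classical result with a citation to Jensen's original 1899 paper and is used only as a tool in the proof of Proposition~\ref{UniquenessSet}. So there is nothing to compare against, and your argument stands on its own.
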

The next theorem, following the notation of this paper, gives a bound for the logarithm of an entire function when its zeros are all distributed on the positive real axis. 
\begin{mainthm}\cite[Chapter I, Theorem 25]{levin1964distribution}\label{Levin}
    Consider a sequence $\{ a_k \}$ of points on the positive real axis with $n(r)$ denoting the counting of $\# a_k$ inside $B_r(0)$ satisfying
    \begin{equation*}
       \Delta = \lim_{r \to \infty} \frac{n(r)}{r^{\rho(r)}}
    \end{equation*}
    for some $\rho>0$. If $\rho$ is not an integer and if 
    \begin{equation*}
        V(z) = \prod_{k=1}^{\infty}G(\frac{z}{a_k};p), \quad (p < \rho <p+1)
    \end{equation*}
    where $G(u;p) = (1-u)e^{u +u^2/2 + \cdots +u^p/p}$,
    then for $z = re^{i\theta}, 0 \leq \theta <2\pi$, the following limit holds uniformly for $\theta \in [0,2\pi)$:
    \begin{equation*}
        \lim_{r\to \infty}\frac{\ln V(re^{i\theta})}{r^{\rho}} = \frac{\pi \Delta}{\sin \pi \rho}e^{i \rho(\theta - \pi)},
    \end{equation*}
    while for integer $\rho$ we have 
    \begin{equation*}
        \lim_{r\to \infty}\frac{\ln V(re^{i\theta})}{r^{\rho}} = \pi \Delta e^{i \rho(\theta - \pi)},
    \end{equation*}
    where for taking logarithm of $G(u;p)$ the branch cut is taken along the positive real axis from $1$ to $\infty$ with $\text{arg} \; G(u;p) = -\pi$ along the upper side of the cut.
\end{mainthm}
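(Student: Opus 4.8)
We outline the argument. The idea is to convert the canonical product into an integral against the zero-counting function and then extract the leading term, which is entirely controlled by the density $\Delta$. Since the $a_k$ lie on $(0,\infty)$ and accumulate only at infinity, $V$ is zero-free on $\C\setminus[a_1,\infty)$ (with $a_1=\min_k a_k$), and there $\ln V(z)=\sum_k\ln G(z/a_k;p)$ for the branch fixed in the statement. Differentiating $\ln(1-u)+\sum_{j=1}^{p}u^{j}/j$ gives the identity $\frac{d}{du}\ln G(u;p)=-u^{p}/(1-u)$, hence $\frac{\partial}{\partial t}\ln G(z/t;p)=z^{p+1}\big(t^{p+1}(t-z)\big)^{-1}$. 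Writing the sum as a Stieltjes integral and integrating by parts --- the boundary terms vanish because $\ln G(z/t;p)=O(t^{-(p+1)})$ as $t\to\infty$, $n(t)=O(t^{\rho})$ with $\rho<p+1$, and $n\equiv 0$ near the origin --- one obtains
\begin{equation*}
\ln V(z)=-z^{p+1}\int_{0}^{\infty}\frac{n(t)}{t^{p+1}(t-z)}\,dt .
\end{equation*}
Substituting $n(t)=\Delta t^{\rho}+\varepsilon(t)$ with $\varepsilon(t)=o(t^{\rho})$ splits this into a main term $-\Delta z^{p+1}\int_{0}^{\infty}t^{\rho-p-1}(t-z)^{-1}\,dt$ and an error term.

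For the main term I would invoke the classical Euler/Beta evaluation $\int_{0}^{\infty}t^{\mu-1}(t+a)^{-1}\,dt=\pi a^{\mu-1}/\sin(\pi\mu)$, valid for $0<\mu<1$ and $|\arg a|<\pi$ (via a keyhole contour around $[0,\infty)$), applied with $\mu=\rho-p\in(0,1)$ and $a=-z=re^{i(\theta-\pi)}$. Collecting the powers of $r$ and the phase factors, and using $\sin\pi(\rho-p)=(-1)^{p}\sin\pi\rho$ together with $e^{i(p+1)\pi}=(-1)^{p+1}$, the main term becomes exactly $\frac{\pi\Delta}{\sin\pi\rho}\,r^{\rho}e^{i\rho(\theta-\pi)}$. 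It is precisely the branch chosen for $(-z)^{\mu-1}$ --- equivalently, the cut placement and the residue picked up by the keyhole contour --- that produces the factor $e^{i\rho(\theta-\pi)}$ and the $1/\sin\pi\rho$; when $\rho$ is an integer no admissible $p$ with $p<\rho<p+1$ exists, and in the degenerate limit the model integral instead contributes a logarithmic term whose residue supplies the factor $\pi\Delta$, which is the second formula in the statement.

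It then remains to show the error term is $o(r^{\rho})$ uniformly in $\theta$. Given $\epsilon>0$, picking $T$ with $|\varepsilon(t)|\le\epsilon t^{\rho}$ for $t\ge T$ and rescaling $t=rs$ bounds the tail by $\epsilon\,r^{\rho}\int_{0}^{\infty}s^{\mu-1}|s-e^{i\theta}|^{-1}\,ds$, while the fixed piece $\int_{0}^{T}$ contributes $O(r^{p})=o(r^{\rho})$. For $\theta$ in any closed arc bounded away from $0$ and $2\pi$ the integral $\int_{0}^{\infty}s^{\mu-1}|s-e^{i\theta}|^{-1}\,ds$ is finite and bounded, so the limit follows there. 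The genuinely delicate step --- and the one I expect to be the main obstacle --- is the uniformity as $\theta\to 0^{+}$ and $\theta\to 2\pi^{-}$, where $z$ approaches the ray carrying the zeros and the absolute-value bound above diverges logarithmically in the distance to the axis. Resolving this requires exploiting cancellation in $\int\varepsilon(t)t^{-(p+1)}(t-z)^{-1}\,dt$ rather than passing to absolute values --- for instance by a further integration by parts transferring a derivative onto a smooth average of $n$, or by pairing the pointwise limit with an equicontinuity argument for $r^{-\rho}\ln V(re^{i\theta})$ on sectors abutting $\R^{+}$ (using Jensen's formula, Theorem~\ref{Jensen}, to control $\ln|V|$ there) --- after which the stated limit holds throughout $[0,2\pi)$.
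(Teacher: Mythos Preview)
The paper does not prove this statement at all: Theorem~\ref{Levin} is quoted verbatim from \cite[Chapter I, Theorem 25]{levin1964distribution} and used as a black box in the proof of Proposition~\ref{Existence}. There is therefore no ``paper's own proof'' against which to compare your attempt.

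That said, your outline is essentially the classical argument one finds in Levin's monograph: rewrite $\ln V$ as a Stieltjes integral against $dn(t)$, integrate by parts using $\partial_t\ln G(z/t;p)=z^{p+1}t^{-(p+1)}(t-z)^{-1}$ to obtain the integral representation $\ln V(z)=-z^{p+1}\int_0^\infty n(t)\,t^{-(p+1)}(t-z)^{-1}\,dt$, split $n(t)=\Delta t^\rho+o(t^\rho)$, and evaluate the leading piece via the Euler--Beta identity. Your identification of the genuine difficulty --- the uniformity of the error estimate as $\theta\to 0^+,\,2\pi^-$, where the crude absolute-value bound on $\int_0^\infty s^{\mu-1}|s-e^{i\theta}|^{-1}\,ds$ blows up --- is accurate, and Levin's actual treatment does require additional work there (exploiting cancellation rather than absolute values, or restricting to $C^0$-sets excluding small neighborhoods of the zeros). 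As a self-contained sketch your write-up is honest about where the real labor lies; but since the present paper simply cites the result, no proof is expected here.
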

\subsection{Uniqueness Result From Short-time Fourier Transform}
Consider a window function $g \in L^2(\R)$, define the \textit{Short-time Fourier Transform (STFT)}  $V_gf: \R^{2} \to \C$ with respect to $g$ by:
\begin{equation*}
    V_g f(x,\omega) := \int_{\R} f(t) \overline{g(t-x)} e^{-2 \pi i \omega \cdot t}dt.
\end{equation*}
It is known that if the window function $g$ is nowhere vanishing, then one can determine $f$ uniquely from $V_gf(x,\omega)$ from $(x,\omega)\in \R^{2}$. To carry out our proof, the following fundamental notations and established results in time-frequency analysis are required. For a function $f: \R \to \C$. the shift of $f$ by $\mu\in \R$ is defined by
\begin{equation*}
    T_\mu f(t): =f(t-\mu),
\end{equation*}
and the reflection map is
\begin{equation*}
    \cR f(t)= f(-t).
\end{equation*}
The short-time Fourier transform can be described as a composition of Fourier transform and shift, namely:
\begin{equation*}
    V_gf(x,\omega) = \mathcal{F}(f\overline{T_x g})(\omega).
\end{equation*}
For a real number $\zeta \in \R$ and $f: \R \to \C$ define the map $f_\zeta: \C \to \C$ by:
\begin{equation*} 
    f_\zeta(t):= (T_{\zeta} f(t))\overline{f(\overline{t}}).
\end{equation*}
If $f$ is assumed to be entire, then for every $\zeta \in \C$ and $t \in \C$ both maps
\begin{equation*}
    \zeta \to f_{\zeta}(t), \quad \quad t \to f_{\zeta}(t),
\end{equation*}
defines an entire function from $\C$ to $\C$.\\

The following lemma demonstrates that for the class of window functions we consider, if the spectrograms of two functions are identical across the entire time-frequency plane, then the functions themselves are equivalent up to a global phase
\begin{lemma}\label{spectrogram}
    Assume $0\neq  g \in L^2(\R)$ satisfies \eqref{FourierDecay} for some $C >0, a\in (1,\infty)$ and $m>1$. If $f,h \in L^2(\R)$ are such that $|V_gf(x,\omega)|^2=|V_gh(x,\omega)|^2$ for every $(x,\omega) \in \R^{2}$, then $f = e^{i\alpha}h$ for some $\alpha \in \R$.
\end{lemma}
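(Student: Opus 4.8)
\emph{Proof proposal.}
The plan is to pass from the full spectrogram to the ``ambiguity function'' $V_ff$ via the fundamental identity of time--frequency analysis, and then to recover $f$ up to a unimodular constant from the family $\{f\cdot\overline{T_xf}:x\in\R\}$. First I would record that $f\overline{T_xg}\in L^1(\R)$ (Cauchy--Schwarz) and $V_gf(x,\cdot)=\mathcal F(f\overline{T_xg})$, so that inverting the Fourier transform of the spectrogram in $\omega$ gives
\begin{equation*}
\int_\R|V_gf(x,\omega)|^2e^{2\pi i\omega s}\,d\omega=\int_\R f(t)\overline{f(t-s)}\,\overline{g(t-x)}g(t-s-x)\,dt=:P_f(x,s),
\end{equation*}
and that a second Fourier transform in $x$ (legitimate since $|V_gf|^2\in L^1(\R^2)$ by Moyal's identity and $P_f(\cdot,s)\in L^1(\R)$), after the substitution $r=t-x$ inside, yields
\begin{equation*}
\int_{\R^2}|V_gf(x,\omega)|^2e^{2\pi i\omega s}e^{-2\pi i x\xi}\,dx\,d\omega=V_ff(s,\xi)\,\overline{V_gg(s,\xi)}.
\end{equation*}
Hence the hypothesis $|V_gf|\equiv|V_gh|$ forces $V_ff=V_hh$ on the set $\{V_gg\neq0\}$.

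The crux is to promote this to $V_ff\equiv V_hh$ on all of $\R^2$; since $V_ff$ and $V_hh$ are continuous it is enough to prove that $\{(s,\xi):V_gg(s,\xi)=0\}$ is Lebesgue--null. For this I would exploit the Fourier covariance of the STFT, $V_gg(x,\omega)=e^{-2\pi i x\omega}V_{\hat g}\hat g(\omega,-x)$, together with the decay hypothesis: because $\hat g$ satisfies \eqref{FourierDecay}, so does $\hat g\cdot\overline{T_\omega\hat g}$, and therefore by Lemma~\ref{entire} the map $x\mapsto V_{\hat g}\hat g(\omega,-x)=\mathcal F(\hat g\,\overline{T_\omega\hat g})(-x)$ extends to an entire function for every fixed $\omega$; multiplying by the entire prefactor $e^{-2\pi i x\omega}$, so does $x\mapsto V_gg(x,\omega)$. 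An entire function has a discrete zero set unless it vanishes identically, and $V_gg(\cdot,\omega)\equiv0$ forces $\hat g$ and $T_\omega\hat g$ to have essentially disjoint supports; this does not occur as soon as $\hat g$ is non-vanishing almost everywhere, which is the situation in the regime generalizing the Gaussian (for instance $\hat g$ comparable to $e^{-a|\xi|^m}$). Fubini then gives $|\{V_gg=0\}|=0$, and hence $V_ff\equiv V_hh$.

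Finally I would invert the Fourier transform in the frequency variable of $V_ff(x,\cdot)=\mathcal F(f\,\overline{T_xf})$ to obtain $f(t)\overline{f(t-x)}=h(t)\overline{h(t-x)}$ for a.e.\ $t$, for every $x$. Taking $x=0$ gives $|f|=|h|$ a.e.; with $S=\{f\neq0\}=\{h\neq0\}$ (mod null) and $u:=h/f$ on $S$ (unimodular a.e.), the relation becomes $u(t)=u(t-x)$ for a.e.\ $(t,x)$ with $t,t-x\in S$. If $u$ were not a.e.\ constant on $S$, there would be disjoint positive-measure sets $A,B\subseteq S$ with $u(A),u(B)$ lying in disjoint arcs of $\mathbb{T}$; but $\int_\R|A\cap(B+x)|\,dx=|A|\,|B|>0$, so on a positive-measure set of $x$ one can find $t\in A$ with $t-x\in B$, contradicting $u(t)=u(t-x)$. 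Thus $u\equiv e^{i\alpha}$ on $S$, and since $f$ and $h$ both vanish off $S$ we conclude $h=e^{i\alpha}f$.

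The main obstacle is squarely the middle step, the non-vanishing (off a null set) of $V_gg$: this is precisely the phenomenon flagged in the introduction, that nothing can be said if $V_gg$ vanishes on a set of positive measure. The super-exponential decay of $\hat g$ is what supplies the analytic rigidity --- via Lemma~\ref{entire}, one variable of $V_gg$ extends to an entire function --- that rules this out once the window's Fourier transform is non-degenerate; a secondary, purely technical, point is that $f,h$ are arbitrary $L^2$ functions with no a priori regularity, so the concluding ``the quotient is constant'' argument must be carried out measure-theoretically rather than by continuity.
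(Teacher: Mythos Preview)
Your overall architecture is exactly right and is, in effect, a self-contained unpacking of what the paper obtains by quoting \cite[Corollary~3.3]{grohs2024phaseless}: pass from $|V_gf|^2$ to $V_ff\cdot\overline{V_gg}$ via the ambiguity-function identity, show that $V_gg$ vanishes only on a null set, and then read off $f\sim h$ from $V_ff=V_hh$. The final measure-theoretic recovery of the global phase is fine.

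The genuine gap is in the middle step, and it lies not in the analyticity but in the \emph{non-identical-vanishing} of the slice you chose. You fix the frequency variable $\omega$ and, via $V_gg(x,\omega)=e^{-2\pi i x\omega}\mathcal F(\hat g\,\overline{T_\omega\hat g})(-x)$, correctly extend $x\mapsto V_gg(x,\omega)$ to an entire function using the decay of $\hat g$. But to conclude that this entire function is not identically zero you need $\hat g\cdot\overline{T_\omega\hat g}\not\equiv 0$, i.e.\ that the support of $\hat g$ meets its $\omega$-translate in positive measure. The hypothesis~\eqref{FourierDecay} is only an \emph{upper} bound on $|\hat g|$ and says nothing about where $\hat g$ vanishes; in particular it permits $\hat g\in C_c^\infty(\R)$, in which case $V_gg(\cdot,\omega)\equiv0$ for every $|\omega|$ exceeding the diameter of $\operatorname{supp}\hat g$, and your Fubini argument collapses. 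Your parenthetical ``for instance $\hat g$ comparable to $e^{-a|\xi|^m}$'' is precisely the extra assumption you would need but do not have.

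The paper avoids this by slicing in the other direction. The decay hypothesis on $\hat g$, through Lemma~\ref{entire}, makes $g$ itself entire, hence (since $g\neq0$) non-vanishing a.e.\ on $\R$. Thus for every \emph{time} shift $s$ the product $g\cdot\overline{T_sg}$ is nonzero a.e., so $\mathcal F(\cR(g_s))$ --- equivalently $\xi\mapsto V_gg(s,\xi)$ --- is not identically zero; and this map is again the Fourier transform of a super-exponentially decaying function, hence entire with discrete zeros. That is the content of the paper's verification that each $Z_\omega$ is Lebesgue-null. The moral: the decay hypothesis lives on $\hat g$, but the non-degeneracy it buys you (via Lemma~\ref{entire}) lives on $g$, so the slice whose non-vanishing you can actually certify is the one controlled by $g$, not by $\hat g$.
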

\begin{proof}
    According to \cite[Corollary 3.3]{grohs2024phaseless}, it suffices to prove that for every fixed $\omega$, the set
    $$Z_\omega = \{ \xi \in \R: \mathcal{F}(\cR(g_\omega))(\xi) = 0\}$$
    has Lebesgue measure $0$. Notice that
    \begin{align}\label{WindowNoVanish}
    \mathcal{F}(\cR(g_\omega))(\xi) &= \mathcal{F}(g(-t-\omega)\overline{g(-t)})(\xi) \nonumber\\
    &= \int_\R g(-t-\omega)e^{-2 \pi i \xi t} dt  *\int_\R \overline{g(-t)}e^{-2 \pi i \xi t}dt\nonumber\\
    &=e^{2 \pi i \omega \xi}\hat{g}(-\xi)*\overline{\hat{g}(\xi)}\nonumber\\
    &= \int_{\R} e^{2\pi i \omega\eta}\hat{g}(- \eta))\overline{\hat{g}(\xi- \eta)}d\eta.
    \end{align}
    Using the fact that  $|\hat{g}(\xi)| \lesssim Ce^{-a|\xi|^m}$ and since $m > 1$, we obtain 
    $$|\hat{g}(- \eta))\overline{\hat{g}(\xi- \eta)}| \lesssim C' e^{-a'|\eta|^m}$$
    for some $C' >0$ and $a \in (1,\infty)$. It follows from Lemma \ref{entire} that ~\eqref{WindowNoVanish} defines a nonzero entire function. Hence $Z_\omega$ has Lebesgue measure $0$.
\end{proof}
While Lemma \ref{spectrogram} provides the desired uniqueness guarantee, it relies on knowing the spectrogram on the entire continuous plane. Our main theorem, however, depends on measurements from a discrete set. To bridge this gap, our strategy is to analytically continue the square of STFT spectrogram into the complex domain and use the deterministic properties of entire function. The following lemma proves not only that such an extension to entire functions is possible but also precisely characterizes the growth.
\begin{lemma}\label{STFTExtension}
    For every $f \in L^2(\R)$, and let the window function $g \in L^2(\mathbb{R})$ have a Fourier transform $\hat{g}(\xi)$ that satisfies \eqref{FourierDecay} for some constants $C, a > 0$ and $m>1$, then it holds that $|V_gf(x,\omega)|^2$ extends from $\R^{2}$ to entire functions $|V_gf(z,z')|^2$ in $\C^2$. Moreover, its growth is characterized by the following:
\begin{enumerate}
    \item With respect to the complex time variable $z$, it is of order at most $\rho_z = \frac{m}{m-1}$ and type at most $\tau_z = \frac{m-1}{m}(2\pi)^{m/(m-1)}(ma)^{-1/(m-1)}$.
    \item With respect to the complex frequency variable $z'$, it is of order at most $\rho_{z'} = m$ and and type at most $\tau_{z'} =a$.
\end{enumerate}
\end{lemma}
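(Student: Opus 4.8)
The natural object to extend is not $|V_gf|$ itself (a modulus is never holomorphic) but the \emph{Fourier--Laplace integral}
\begin{equation*}
  V_gf(z,z') \;:=\; \int_{\R} f(t)\,g^{*}(t-z)\,e^{-2\pi i z' t}\,dt, \qquad g^{*}(w):=\overline{g(\bar w)},
\end{equation*}
where $g^{*}$ is entire because $g$ is entire by Lemma~\ref{entire}. First I would show this integral converges absolutely and locally uniformly on $\C^{2}$: writing $z=x+iy$, $z'=x'+iy'$, the bound $|g^{*}(t-z)|=|g((t-x)+iy)|\le C'e^{-\alpha|t-x|^{m/(m-1)}+\beta|y|^{m/(m-1)}}$ from Corollary~\ref{ImaginaryLine}, together with $|e^{-2\pi i z' t}|=e^{2\pi y' t}$ and Cauchy--Schwarz against $f\in L^{2}$, controls the integral by a quantity that is locally bounded in $(z,z')$, the super-exponential decay in $t$ (exponent $m/(m-1)>1$) dominating the exponential factor. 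Holomorphy in each variable separately then follows from Morera's theorem (Theorem~\ref{Morera}) applied to the $n$-truncated integrals, and joint holomorphy on $\C^{2}$ follows from Hartogs' (or Osgood's) lemma. On $\R^{2}$ this integral agrees with $V_gf(x,\omega)$, and the entire extension of the spectrogram $|V_gf(x,\omega)|^{2}$ is then $V_gf(z,z')\,\overline{V_gf(\bar z,\bar z')}$, which is entire since the second factor is a holomorphic function of $(z,z')$.

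For the growth in the time variable (item~1), I would fix $\omega\in\R$ and use the covariance identity of time-frequency analysis, $V_gf(x,\omega)=\int_{\R}\hat f(\eta+\omega)\,\overline{\hat g(\eta)}\,e^{2\pi i x\eta}\,d\eta$, whose right-hand side extends verbatim to $z\mapsto V_gf(z,\omega)$. Its Taylor coefficients are $c_{n}=\frac{(2\pi i)^{n}}{n!}\int_{\R}\eta^{n}\hat f(\eta+\omega)\overline{\hat g(\eta)}\,d\eta$, so Cauchy--Schwarz gives $|c_{n}|\le\frac{(2\pi)^{n}}{n!}\,\|\hat f\|_{2}\,\big(\int_{\R}|\eta|^{2n}|\hat g(\eta)|^{2}\,d\eta\big)^{1/2}\le\frac{(2\pi)^{n}}{n!}\,\|\hat f\|_{2}\,C\big(\tfrac{2}{m}(2a)^{-(2n+1)/m}\Gamma(\tfrac{2n+1}{m})\big)^{1/2}$. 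This is exactly the estimate of Lemma~\ref{OrderType} up to an $n$-independent constant and with $(a,n)$ replaced by $(2a,2n)$ inside the Gamma function; the Stirling computation already carried out there — applied to $|c_n|^{1/n}$ and $n|c_n|^{\rho/n}$ via \eqref{OrderCalculation} and \eqref{TypeCalculation} — survives these harmless modifications and yields order $\rho_z\le\frac{m}{m-1}$ and type $\tau_z\le\frac{m-1}{m}(2\pi)^{m/(m-1)}(ma)^{-1/(m-1)}$, the constant factor $\|\hat f\|_2C\sqrt{2/m}$ and all polynomial factors contributing nothing to either limit.

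For the growth in the frequency variable (item~2), I would fix $x\in\R$ and view $z'\mapsto V_gf(x,z')$ as the Fourier--Laplace transform of $t\mapsto f(t)\overline{g(t-x)}$, with Taylor coefficients $c_{n}=\frac{(-2\pi i)^{n}}{n!}\int_{\R}t^{n}f(t)\overline{g(t-x)}\,dt$. Bounding $|c_n|\le\frac{(2\pi)^n}{n!}\|f\|_2\big(\int_{\R}|t|^{2n}|g(t-x)|^2\,dt\big)^{1/2}$ and using the \emph{sharp} spatial decay $|g(s)|\lesssim e^{-(\tau_z-\varepsilon)|s|^{m/(m-1)}}$ reduces $\int|t|^{2n}|g(t-x)|^2\,dt$ to a Gamma-function of argument $\sim\frac{2n}{m/(m-1)}$, after which the same Stirling bookkeeping as above gives order $\rho_{z'}\le m$ and type $\tau_{z'}\le a$. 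The one algebraic point that must be checked is the Young/Legendre duality $\sup_{\xi}\big(2\pi t\xi-a|\xi|^m\big)=\frac{m-1}{m}(2\pi)^{m/(m-1)}(ma)^{-1/(m-1)}|t|^{m/(m-1)}=\tau_z|t|^{m/(m-1)}$, which is precisely what makes the conjugate exponents $m$ and $\frac{m}{m-1}$, and the constants $a$ and $\tau_z$, pair up so that the type comes out to exactly $a$ rather than some unspecified constant.

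The main obstacle is the sharp spatial decay rate of $g$ invoked in the previous paragraph: Corollary~\ref{ImaginaryLine}, applied directly, only yields $|g(s)|\lesssim e^{-\alpha|s|^{m/(m-1)}}$ for \emph{some} $\alpha>0$, whereas item~2 needs the rate $\tau_z$. I would obtain this by combining Lemma~\ref{OrderType} (which gives that $g$ is entire of order $\frac{m}{m-1}$ and type at most $\tau_z$) with the Phragmén--Lindelöf / indicator-function argument underlying Corollary~\ref{ImaginaryLine}, applied to $g$: since $g$ is bounded on $\R$ (being $L^2$ and entire) and of type $\le\tau_z$, trigonometric convexity of its growth indicator forces the real-axis decay to have the rate dual to $\tau_z$ in the exponent $\frac{m}{m-1}$. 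Tracking these constants carefully is the delicate part; once it is in hand, the rest of the proof is the absolute-convergence/Morera argument of paragraph one together with two invocations of the Gamma--Stirling computation already performed in Lemma~\ref{OrderType}.
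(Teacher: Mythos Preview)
Your convergence and entireness argument coincides with the paper's: both split off $f$ by Cauchy--Schwarz, bound $|g(t-z)|$ via Corollary~\ref{ImaginaryLine}, and conclude by Morera. You are more careful than the paper about the fact that a modulus cannot be holomorphic, supplying the standard remedy $V_gf(z,z')\,\overline{V_gf(\bar z,\bar z')}$; the paper simply writes $|V_gf(z,z')|$ throughout and does not comment.

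Where you diverge is the growth estimate. The paper disposes of both items in one sentence, invoking a principle from Boas (p.~68) that ``the order and type of an entire function is retained under integral'' and then pointing to Lemma~\ref{OrderType}. Your route is explicit: for item~1 you pass to the frequency side via the STFT covariance identity so that the $z$-dependence sits in $e^{2\pi i z\eta}$ against the super-exponentially decaying $\overline{\hat g(\eta)}$, bound the Taylor coefficients by Cauchy--Schwarz and a $\Gamma((2n+1)/m)$ integral, and rerun the Stirling computation of Lemma~\ref{OrderType}. This is more self-contained and makes the $\|f\|_2$-dependence visible; the Boas shortcut and your coefficient computation are two packagings of the same estimate.

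For item~2 you correctly isolate a point the paper does not address. The Boas principle applied na\"ively to $e^{-2\pi i z' t}$ gives order~$1$ with type $2\pi|t|$, non-uniform in $t$, so it does not by itself yield order~$m$ and type~$a$; one really must feed in the spatial decay of $g$, and to get type \emph{exactly}~$a$ one needs the sharp rate $|g(s)|\lesssim e^{-(\tau_z-\varepsilon)|s|^{m/(m-1)}}$, whereas Corollary~\ref{ImaginaryLine} as stated gives only some unspecified exponent. Your Legendre-duality remark is exactly why $\tau_z$ is the right constant. Be aware, though, that the indicator argument you sketch is not automatic: knowing only $h_g(0)\le 0$ and $\sup_\theta h_g(\theta)\le\tau_z$ does not by trigonometric convexity alone force $h_g(0)\le-\tau_z$; you will need to use more, for instance that the imaginary-axis growth of $g$ actually attains $\tau_z$ (which does follow from $|\hat g(\xi)|\le Ce^{-a|\xi|^m}$ via the Laplace-method estimate on $\int e^{-a|\xi|^m+2\pi|y|\xi}\,d\xi$). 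This sharp-constant step is the genuinely delicate part of the lemma, and your plan names it squarely while the paper's one-line appeal to Boas and Lemma~\ref{OrderType} leaves it implicit.
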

\begin{proof}
The extended STFT can be written as:
\begin{equation*}
    |V_gf(z,z')|^2 = \Bigg|\int_{-\infty}^{\infty} \overline{g(t -z)} e^{2\pi i z' t} f(t) dt\Bigg|^2 
\end{equation*}
For this integral to converge, we can analyze its magnitude using the Cauchy-Schwarz inequality. 
\begin{align*}
    \Bigg|\int_{-\infty}^{\infty} \overline{g(t -z)} e^{2\pi i z' t} f(t) dt\Bigg|^2
    &\le \left( \int_{-\infty}^{\infty} |\overline{g(t-z)}|^2 |e^{2\pi i z' t}|^2 dt \right) \left( \int_{-\infty}^{\infty} |f(t)|^2 dt \right)
\end{align*}

Fix $z'=x'+iy'\in \C$ and let $K\subset \C$ be compact. We prove that the previous integral converges uniformly for $z\in K$.
%It suffices to prove our argument by showing that if for fixed $z$ and arbitrary compact subset $K \subset \C$, $|V_gf(z, z')|$ converges for $z' \in K$ and for fixed $z'$, $|V_gf(z, z')|$ converges for $z \in K'$ where $K'$ is a compact subset of $\C$.

\begin{equation}\label{fixzprime}
    \int_{-\infty}^{\infty} |\overline{g(t-z)}|^2 |e^{2\pi iz't}|^2 dt = \int_{-\infty}^{\infty}|g(t-z)|^2e^{-4\pi y't}dt 
\end{equation}
From Corollary \ref{ImaginaryLine} we conclude that there exists positive real number $a,b,C$ such that 
\begin{equation*}
    |g(x+iy)| \leq Ce^{-a|x|^{\rho}+b|y|^{\rho}}.
\end{equation*}
which leads to
\begin{equation*}
    |g((t-x)+iy)|^2 \leq Ce^{-2a|t-x|^{\rho}+2b|y|^{\rho}}.
\end{equation*}
Therefore we have
\begin{align*}
\int_{-\infty}^{\infty}|g(t-z)|^2e^{-4\pi y't}dt \leq C\int_{-\infty}^{\infty}e^{-2a|t-x|^{\rho}+2b|y|^{\rho}-4\pi y't}dt<\infty
\end{align*}
It follows from Theorem A that \eqref{fixzprime} converges uniformly for $z=x+iy\in K$.

Similarly, fix $z=x+iy$ and consider $z' = x'+iy'\subset K'$, where $K'$ is compact. Then we have
\begin{equation}\label{fixz}
    \int_{-\infty}^{\infty} |\overline{g(t-z)}|^2 |e^{2\pi i z' \xi}|^2 dt \leq C\int_{-\infty}^{\infty} e^{-2a|t-x|^{\rho}+2b|y|^{\rho}-4\pi y't}dt.
\end{equation}
Since $z=x+iy$ is fixed, for $z' \in K'$ it follows that \eqref{fixz} converges for $z' \in K$. Since $|V_gf (z,z')|^2$ is entire in $z$ for every fixed $z'$ and is entire in $z'$ for every fixed $z$, By Hartog's Theorem on separate analyticity \cite[Theorem 1.2.5]{krantz2001function}, $|V_gf (z,z')|^2$ is holomorphic for all $(z,z') \in \C^2$. Following a statement from \cite[Page 68]{boas2011entire}, the order and type of an entire function is retained under integral, therefore the orders and types for $|V_gf (z,z')|^2$ can be computed directly using Lemma \ref{OrderType}. It is noteworthy that our computation for orders is compatible with the result from \cite[Lemma 3.4]{grohs2024phaseless} when $m = 2$.
\end{proof}
\section{Main Result}
Denote the ring of holomorphic functions by $\mathcal{O}(\C)$. Consider the collection of entire functions with $\rho>1$ and $a,b>0$:
$$\fG_{a,b,\rho} = \{ f \in \mathcal{O}(\C)\big| |f(x+iy)| \lesssim e^{-a|x|^{\rho}+b|y|^{\rho}} \}.$$
It is straightforward to see that $\fG_{a,b,\rho}$ is a linear space. The objective is to establish a uniqueness set so that whenever two functions $f, g \in \fG_{a,b,\rho}$ agree on the set, they agree \textit{everywhere}. Recent literature shows that such functions cannot be uniquely determined from their values on integer lattices \cite{alaifari2021phase}. For the purpose of notation, let $U \subset \mathcal{O(\C)}$ denote a linear function space. A set $\Lambda \subset \C$ is called a uniqueness set of $U$ if for any $f \in U$, it holds that 
$$(f( \lambda ) = 0 \; \text{for all} \; \lambda \in \Lambda) \Longrightarrow f = 0 \; \text{everywhere on } \: \C.$$
Since zero of any entire function can't be an accumulation point, any open set in $\C$ is trivially a uniqueness set. Without this property, determining discrete uniqueness sets of $\fG_{a,b,\rho}$ contained in $\R$ is a nontrivial task. We first propose a sufficient condition for such a set.\\

\begin{proposition}\label{UniquenessSet}
    Let $\rho > 1$ and $a,b \in \R^+$. Consider an increasing sequence $\{ \lambda_k \}$ such that
    \begin{equation*}
        \liminf_{k \to \infty} \frac{\lambda_k}{k^{1/\rho}} < \left( \frac{2}{b\rho e} \right)^{1/\rho} 
    \end{equation*}
    and a sequence $\Lambda := \{ \pm \lambda_k: k \in \N \} \subset \R$. Then $\Lambda$ is a uniqueness set for the class of entire functions in $\fG_{a,b,\rho}$.
\end{proposition}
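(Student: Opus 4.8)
The plan is to argue by contradiction: suppose $f \in \fG_{a,b,\rho}$ vanishes on $\Lambda = \{\pm\lambda_k\}$ but $f \not\equiv 0$. Since the zeros of $f$ cannot accumulate at a finite point, we may assume (dividing by a power of $z$ if necessary) that $f(0) \neq 0$. The essential idea is to compare two estimates of the zero-counting function $n(r)$ of $f$: an \emph{upper} bound coming from the growth of $f$ — via the definition of $\fG_{a,b,\rho}$, the function $f$ has order at most $\rho$ and finite type, so by Jensen's formula (Theorem~\ref{Jensen}) one gets $\int_0^r \tfrac{n(t)}{t}\,dt \lesssim r^\rho$, hence $n(r) \lesssim r^\rho$ — and a \emph{lower} bound coming from the fact that $\Lambda$ already supplies many zeros. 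The hypothesis $\liminf_k \lambda_k / k^{1/\rho} < (2/(b\rho e))^{1/\rho}$ means that along a subsequence there are at least $k$ zeros (namely $\pm\lambda_1,\dots,\pm\lambda_k$, so really $2k$) inside a disk of radius roughly $c\,k^{1/\rho}$ with $c^\rho < 2/(b\rho e)$; this forces $n(r)/r^\rho$ to be bounded below, along that subsequence, by a constant $\Delta$ with $\Delta \geq 2/c^\rho > b\rho e$ (the factor $2$ from the $\pm$ symmetry).

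The next step is to turn that lower density of real zeros into a contradiction with the type bound $b$. For this I would factor out the known zeros: let $V(z)$ be the canonical product (genus $p$ with $p < \rho < p+1$, which is legitimate since $\rho > 1$ is assumed, and I would treat the non-integer and integer cases as in Theorem~\ref{Levin}) over the points $\{\lambda_k\}$ lying on the positive real axis. Theorem~\ref{Levin} gives the precise asymptotics
\begin{equation*}
    \lim_{r\to\infty} \frac{\ln|V(re^{i\theta})|}{r^\rho} = \frac{\pi\Delta}{\sin\pi\rho}\cos\bigl(\rho(\theta-\pi)\bigr)
\end{equation*}
(with $\sin\pi\rho$ replaced by $1$ when $\rho$ is an integer). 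Evaluating on the imaginary axis $\theta = \pi/2$ — the direction where $\fG_{a,b,\rho}$ permits the fastest growth, namely $|f(iy)| \lesssim e^{b|y|^\rho}$ — and also on the real axis, where $f$ decays like $e^{-a|x|^\rho}$, should produce a rigid constraint. The symmetric sequence $\Lambda = \{\pm\lambda_k\}$ means $f$ is divisible by $V(z)V(-z)$ (up to reindexing and the genus bookkeeping), and the quotient $F := f / (V(z)V(-z))$ is entire; examining $\ln|F|$ along the real and imaginary axes and invoking a Phragmén–Lindelöf / indicator-function argument (the indicator of a function of order $\rho$ of finite type is a $\rho$-trigonometrically convex function, cf. Levin) should show that $F$ must itself have growth incompatible with being entire unless $\Delta$ is small — precisely, the combined indicator of $V(z)V(-z)$ in the imaginary direction already exceeds $b$ once $\Delta > b\rho e$ roughly, which is exactly the threshold in the hypothesis (the constant $\rho e$ emerging from optimizing $\pi\Delta/(\sin\pi\rho)$ against the elementary bound $\tfrac{\pi}{\sin\pi\rho} \geq$ something, or more cleanly from the Lindelöf-type relation between density of real zeros and type).

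The main obstacle, I expect, is pinning down the exact constant $(2/(b\rho e))^{1/\rho}$ and making the indicator-function comparison rigorous when $f$ is only known to be \emph{dominated} by $e^{-a|x|^\rho + b|y|^\rho}$ rather than having an exact indicator: one must be careful that $f$ could have \emph{additional} zeros off the real axis (contributing to $V$ being a proper factor) and that the quotient $F$, while entire, still satisfies usable bounds. A clean way around this is to avoid factoring $f$ itself and instead apply Jensen's formula directly together with the Levin asymptotics for a comparison product built only from $\Lambda$: integrate $\ln|f(re^{i\theta})|$ over $\theta$, bound it above using the $\fG_{a,b,\rho}$ membership (the angular average of $-a|r\cos\theta|^\rho + b|r\sin\theta|^\rho$ times $r^\rho$, which is an explicit constant times $r^\rho$), and bound it below by $\int_0^r n(t)/t\,dt \geq \Delta' r^\rho$ using the lower zero-density from $\Lambda$; choosing the radii along the subsequence realizing the $\liminf$ and comparing the two constants yields the contradiction provided $\liminf_k \lambda_k/k^{1/\rho}$ is below the stated threshold. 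I would present the argument in this second, more robust form, using Theorem~\ref{Levin} only to identify/verify the sharp constant.
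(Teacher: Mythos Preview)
Your overall strategy --- contradiction via Jensen's formula, comparing an upper bound on zero counting (from growth) to a lower bound (from $\Lambda$) --- is exactly the paper's. But both of your concrete plans miss the one elementary step that actually makes the argument close and produces the constant $(2/(b\rho e))^{1/\rho}$. The paper never factors $f$, never invokes Theorem~\ref{Levin}, and does not compute the angular average of $-a|\cos\theta|^\rho + b|\sin\theta|^\rho$: it simply bounds that term by its pointwise maximum $b$ (attained at $\theta=\pm\pi/2$), giving $\int_0^r n(t)/t\,dt \le \log C + br^\rho$, and then applies the dilation trick
\[
n(r)\log s \;\le\; \int_r^{sr}\frac{n(t)}{t}\,dt \;\le\; \log C + b s^\rho r^\rho \qquad (s>1).
\]
Setting $r=\lambda_k$ (so $n(r)\ge 2k$), passing to the $\liminf$, and maximizing $(\log s)/s^\rho$ over $s>1$ (the maximum is $1/(\rho e)$, at $s=e^{1/\rho}$) yields the stated threshold directly.

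Your second plan instead tries to bound the Jensen integral itself from \emph{below} by $\Delta' r^\rho$; but with only a $\liminf$ hypothesis on $\lambda_k/k^{1/\rho}$ you control $n(t)$ from below merely along a subsequence of radii, whereas $\int_0^r n(t)/t\,dt$ depends on all of $n$ on $[0,r]$ --- this step does not go through cleanly and in any case would not recover the sharp constant. The appeal to Theorem~\ref{Levin} ``to identify/verify the sharp constant'' is likewise misplaced: that theorem produces factors of the form $\pi/\sin(\pi\rho)$, while the $\rho e$ in the proposition is purely the calculus optimization above (Theorem~\ref{Levin} enters the paper only in the converse construction, Proposition~\ref{Existence}). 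Your first plan --- factor out a canonical product and run an indicator-function comparison --- could in principle be pushed through, but it is much heavier than needed, and the obstacles you yourself flag (extra off-axis zeros, bounding the quotient) are real; the paper's route avoids all of this.
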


\begin{proof}
    Suppose $f$ is a function in the given class and assume $f(0)=1$ without loss of generality. We begin with Jensen's formula (Theorem \ref{Jensen}):
    \begin{equation*}
        \int_0^r \frac{n(t)}{t}dt = \frac{1}{2\pi} \int_0^{2\pi}\log |f(re^{i\theta})|d\theta. 
    \end{equation*}
    The growth condition on $f(z)$ implies that for $z = re^{i\theta} = r\cos\theta + ir\sin\theta$:
    \begin{equation*}
        \log|f(re^{i\theta})| \le \log C - a|r\cos\theta|^\rho + b|r\sin\theta|^\rho = \log C + r^\rho(-a|\cos\theta|^\rho + b|\sin\theta|^\rho).
    \end{equation*}
    To find an upper bound for the integral, we maximize the angular term $g(\theta) = -a|\cos\theta|^\rho + b|\sin\theta|^\rho$. This term is maximized when $|\sin\theta|=1$ and $|\cos\theta|=0$, which occurs at $\theta = \pm \pi/2$. The maximum value is $g(\pi/2) = -a(0)^\rho + b(1)^\rho = b$. Then we have
    \begin{equation*}
        \int_0^r \frac{n(t)}{t}dt \le \log C + br^\rho.
    \end{equation*}
    Since the zero-counting function $n(t)$ is non-decreasing, for any $s>1$ we can estimate $n(r)$:
    \begin{equation*}
        n(r) \log s \le \int_r^{sr} \frac{n(t)}{t}dt \le \int_0^{sr} \frac{n(t)}{t}dt \le \log C + b(sr)^\rho = \log C + bs^\rho r^\rho. 
    \end{equation*}
    Where in the inequalities above, we used the fact that $n(sr)\geq n(r) \geq n(0)$ are fixed number and therefore 
    \begin{equation*}
        n(sr)\log(sr)-n(r)\log(r)\geq n(r)\big(\log(sr)- \log(r)\big) = n(r)\log(s).
    \end{equation*}
    Now, assume $f$ vanishes on $\Lambda$ but is not identically zero. We choose $r = \lambda_k$, which implies that the number of zeros $n(r) = \#(\lambda_k) \ge 2k$. Substituting this into our inequality gives
    \begin{equation*}
        2k \log s \le \log C + bs^\rho \lambda_k^\rho.
    \end{equation*}
    We rearrange the terms to find a lower bound for the limit infimum of the zero distribution:
    \begin{equation*}
        \frac{2 \log s}{bs^\rho} - \frac{\log C}{bs^\rho k} \le \frac{\lambda_k^\rho}{k}.
    \end{equation*}
    Taking the limit inferior as $k \to \infty$, the term containing $\log C$ vanishes, and we get
    \begin{equation*}
        \frac{2 \log s}{bs^\rho} \le \liminf_{k \to \infty} \frac{\lambda_k^\rho}{k} = \left( \liminf_{k \to \infty} \frac{\lambda_k}{k^{1/\rho}} \right)^\rho.
    \end{equation*}
    This inequality holds for any $s>1$. To obtain the tightest possible bound, we must maximize the left-hand side with respect to $s$. Let $g(s) = \frac{\log s}{s^\rho}$. The derivative $g'(s) = s^{\rho-1}(1 - \rho\log s) / s^{2\rho}$ is zero when $1 - \rho\log s = 0$, which occurs at $s = e^{1/\rho}$. \\
    The maximum value of $g(s)$ is therefore:
    \begin{equation*}
        g(e^{1/\rho}) = \frac{\log(e^{1/\rho})}{(e^{1/\rho})^\rho} = \frac{1/\rho}{e} = \frac{1}{\rho e}.
    \end{equation*}
    Substituting this maximum value back into our inequality gives the necessary condition for a non-zero function to exist:
    \begin{equation*}
        \frac{2}{b} \left( \frac{1}{\rho e} \right) \le \left( \liminf_{k \to \infty} \frac{\lambda_k}{k^{1/\rho}} \right)^\rho.
    \end{equation*}
    Taking the $\rho$-th root of both sides, we find the final lower bound:
    \begin{equation*}
        \left( \frac{2}{b\rho e} \right)^{1/\rho} \le \liminf_{k \to \infty} \frac{\lambda_k}{k^{1/\rho}}.
    \end{equation*}
    Hence, if
    \begin{equation*}
        \left( \frac{2}{b\rho e} \right)^{1/\rho} > \liminf_{k \to \infty} \frac{\lambda_k}{k^{1/\rho}}.
    \end{equation*}
    would give a contradiction and $f$ must be $0$ identically.
\end{proof}
Next we introduce a lower bound of the condition on the zero set for $\fG_{a,b,\rho}$. The way we prove this is to construct a nontrivial entire function with prescribed zeros and show that this function is indeed in $\fG_{a,b,\rho}$.

\begin{proposition}\label{Existence}
    Let $\rho > 1$ and $a, b \in \R^+$. Consider an increasing sequence $\{ \lambda_k \}, k \in \N$ and $\Lambda := \{ \pm \lambda_k: k \in \N \}$. If
    \begin{equation*}
        \liminf_{k \to \infty} \frac{\lambda_k}{k^{1/\rho}} > C_\rho, 
    \end{equation*}
    where $C_\rho$ is defined as some positive number satisfying
    \begin{equation*}
        C_\rho = 
        \begin{cases} 
            \left( \frac{\pi}{b \sin(\pi\rho/2)} \right)^{1/\rho} & \text{if } \rho/2 \text{ is not an integer} \\
            \left( \frac{\pi}{b} \right)^{1/\rho} & \text{if } \rho/2 \text{ is an integer}
        \end{cases}
    \end{equation*}
    then $\Lambda$ is \textbf{not} a uniqueness set for  functions in $\fG_{a,b,\rho}$.
\end{proposition}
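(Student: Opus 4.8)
The plan is to exhibit, under the stated hypothesis, one nonzero entire function $F$ that vanishes on all of $\Lambda$ and lies in $\fG_{a,b,\rho}$; the existence of such an $F$ shows at once that $\Lambda$ is not a uniqueness set. The first step is to convert the hypothesis into a density bound: since $\liminf_k \lambda_k/k^{1/\rho} > C_\rho$, there is $C' > C_\rho$ with $\lambda_k \ge C' k^{1/\rho}$ for all large $k$, so the counting function $n(r) = \#\{k : \lambda_k \le r\}$ satisfies $\limsup_{r\to\infty} n(r)/r^\rho \le (C')^{-\rho} < C_\rho^{-\rho}$, where $C_\rho^{-\rho}$ is $\frac{b\sin(\pi\rho/2)}{\pi}$ when $\rho/2 \notin \N$ and $\frac{b}{\pi}$ when $\rho/2 \in \N$. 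I would then fix $\Delta$ with $\limsup_r n(r)/r^\rho < \Delta < C_\rho^{-\rho}$ and, if necessary, adjoin further real points to $\Lambda$ to obtain a symmetric real sequence $\{\pm\mu_j\} \supseteq \Lambda$ whose counting function $N(r)$ satisfies $\lim_{r\to\infty} N(r)/(2r^\rho) = \Delta$ exactly, so that Theorem~\ref{Levin} becomes applicable.

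Next I would form the canonical product $F(z) = \prod_j(1 - z^2/\mu_j^2)$ — of genus $0$ when $\rho < 2$, and for larger $\rho$ the $\pm$-symmetry cancels the odd-degree exponential factors so that $F$ still has this form — which is entire of order $\rho$ and vanishes precisely on $\{\pm\mu_j\} \supseteq \Lambda$. Writing $w = z^2$ and $P(w) = \prod_j(1 - w/\mu_j^2)$, the zeros $\mu_j^2$ of $P$ lie on the positive real axis with counting function asymptotic to $\Delta t^{\rho/2}$, so Theorem~\ref{Levin} (with Levin's exponent equal to $\rho/2$) gives the asymptotics of $\ln P(te^{i\phi})$ uniformly in $\phi$; substituting $w = z^2$ back yields the growth indicator
\[ h_F(\theta) := \lim_{r\to\infty} \frac{\ln|F(re^{i\theta})|}{r^\rho} = \frac{\pi\Delta}{\sin(\pi\rho/2)}\cos\!\bigl(\rho(\theta - \tfrac{\pi}{2})\bigr) \qquad (\rho/2 \notin \N), \]
with $\pi\Delta\cos(\rho(\theta - \tfrac{\pi}{2}))$ replacing the right-hand side in the integer case. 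In particular $h_F(\pm\pi/2) = \pi\Delta/\sin(\pi\rho/2)$ is the rate of growth of $F$ along the imaginary axis.

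The last step is to verify $F \in \fG_{a,b,\rho}$. Since the convergence in Theorem~\ref{Levin} is uniform in the argument, for each $\varepsilon > 0$ there is $R$ with $\ln|F(re^{i\theta})| \le (h_F(\theta) + \varepsilon)r^\rho$ for all $r \ge R$ and all $\theta$, and one checks the pointwise inequality $h_F(\theta) \le -a|\cos\theta|^\rho + b|\sin\theta|^\rho$ in every direction; the binding direction is $\theta = \pm\pi/2$, where it reads $\pi\Delta/\sin(\pi\rho/2) \le b$ (respectively $\pi\Delta \le b$ in the integer case), which is exactly the choice $\Delta < C_\rho^{-\rho}$ made in the first step. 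As $e^{-a|x|^\rho + b|y|^\rho}$ is bounded below on compact sets, this asymptotic bound upgrades to $|F(x+iy)| \lesssim e^{-a|x|^\rho + b|y|^\rho}$ on all of $\C$, whence $F \in \fG_{a,b,\rho}$; being nonzero and vanishing on $\Lambda$, it witnesses that $\Lambda$ is not a uniqueness set.

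I expect the main obstacle to be precisely this last verification: upgrading Levin's asymptotic indicator identity to the genuine uniform global bound defining $\fG_{a,b,\rho}$, and in particular confirming $h_F(\theta) \le -a|\cos\theta|^\rho + b|\sin\theta|^\rho$ simultaneously for all $\theta$ — not merely along the imaginary axis — while also carrying out the density regularization cleanly and treating the integer-order case (which is exactly why $C_\rho$ is defined by cases).
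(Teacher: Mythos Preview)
Your approach is essentially the paper's: pass to $w=z^2$, form the canonical product $V(w)$ of genus $\lfloor\rho/2\rfloor$ with zeros $\lambda_k^2$ on the positive ray, invoke Levin's asymptotic formula (Theorem~\ref{Levin}) for its indicator, compare the resulting type against the threshold $b$ encoded in $C_\rho$, and set $F(z)=V(z^2)$ as the witness. The one substantive refinement in your plan is the density regularization --- adjoining extra real points so that the counting function has an exact limit $\Delta$ --- whereas the paper simply asserts that $\Delta_V=\lim_{r\to\infty} n_V(r)/r^{\rho/2}$ exists by appeal to ``completely regular growth''; your version is the cleaner justification for applying Theorem~\ref{Levin} as stated. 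The obstacle you flag at the end --- upgrading the imaginary-axis type bound to the full directional inequality $h_F(\theta)\le -a|\cos\theta|^\rho+b|\sin\theta|^\rho$, in particular securing the $-a$ decay along the real axis for arbitrary $a>0$ --- is precisely the point where the paper's own argument is terse: it only checks $\sup_\theta \lim_{r\to\infty}\ln|V(re^{i\theta})|/r^{\rho/2}<b$ and does not separately address the real-axis decay rate, so your caution there is well placed rather than a divergence from the paper.
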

\begin{proof}
    Assume $\rho/2$ is not an integer. Let $V(w)$ be the canonical product of genus $p = \lfloor \rho/2 \rfloor$ constructed from the zeros with $m \in \N_0$ and $\omega_k = \lambda_k^2$:
    \begin{equation*}
        V(\omega) = \omega^m\prod_{k=1}^{\infty} G(\frac{\omega}{\omega_k},p)
    \end{equation*}
    where
    \begin{equation*}
        G(u,p)=(1-u)e^{u+ u^2/2 +\cdots +u^p/p}.
    \end{equation*}
    The growth of the canonical product $V(w)$ can be determined from the distribution of its zeros. To build this connection, we first define a quantitative description of zeros $\Delta_V$ for $V(\omega)$ by 
    \begin{equation*}
        \Delta_V = \lim_{r \to \infty}\frac{n_V(r)}{r^{\rho/2}}
    \end{equation*}
    where $n_G(r)$ is the zero-counting function for $V$ inside $B_r(0) \subseteq \C$. Since $V$ is an entire function of finite order and its zeros are assumed to lie on the positive ray, it is of \textit{completely regular growth} and the limit $\Delta_V$ always exists. The condition on $\Lambda$ indicates that for any $\epsilon > 0$, there exists an integer $K$ such that for all $k>K$:
    \begin{equation*}
        \lambda_k > (C_\rho + \epsilon)k^{1/\rho}.
    \end{equation*}
    The zeros of $V(\omega)$ are precisely $\omega_k = \lambda_k^2$, so for $k>K$, we have $\omega_k > (C_\rho + \epsilon)^2 k^{2/\rho}$, therefore, consider zeros inside $B_r(0)$, we have
    \begin{equation*}
        (C_\rho + \epsilon)^2 k^{2/\rho} < \omega_k \le r \implies k < \left(\frac{r}{(C_\rho + \epsilon)^2}\right)^{\rho/2} = \frac{r^{\rho/2}}{(C_\rho + \epsilon)^\rho}.
    \end{equation*}
    Hence the total counting of zeros for any $r \in \R^+$ must satisfy this bound:
    \begin{equation*}
        n_V(r) \le K + \frac{r^{\rho/2}}{(C_\rho+\epsilon)^\rho},
    \end{equation*}
    where on the right hand side, the $K$ term comes from counting $\omega_i, i \in \{ 1, \cdots, K\}$ and the later term comes from the limit behavior. Finally we conclude that 
    \begin{equation*}
        \Delta_V = \lim_{r \to \infty}\frac{n_V(r)}{r^{\rho/2}}\leq \frac{1}{(C_\rho+\epsilon)^\rho}<\frac{1}{C_\rho^\rho}.
    \end{equation*}
    Using Theorem \ref{Levin} we obtain 
    \begin{equation}\label{LevinFormula}
        \lim_{r\to \infty}\frac{|\ln V(r e^{i \theta})|}{r^{\rho/2}} = \frac{\pi \Delta_V}{|\sin \pi \rho/2|}.  
    \end{equation}
    Notice that for finite $\omega \in \C$, $V(\omega)$ has no pole, we can always find a fixed real number $C$ such that $|G(x+iy)| \leq C e^{-a|x|^{\rho/2}+b|y|^{\rho/2}}$. Therefore to prove our argument, it suffices to prove 
    \begin{equation*}
        \lim_{r\to \infty}\frac{\ln |V(r e^{i \theta})|}{r^{\rho/2}}< b.
    \end{equation*}
    Substitute the value of $\Delta_V$ into Equation \eqref{LevinFormula}, we obtain
    \begin{equation*}
        \lim_{r\to \infty}\frac{|\ln V(r e^{i \theta})|}{r^{\rho/2}} < \frac{\pi}{|\sin \pi \rho/2|}\cdot\frac{b\sin(\pi \rho/2)}{\pi} =b
    \end{equation*}
    notice that we assumed the zeros are all on the positive real axis, hence we automatically have $\sin \pi \rho/2$ is positive. Therefore we conclude that 
    \begin{equation*}
        \lim_{r \to \infty}\frac{\ln |V(r e^{i \theta})|}{r^{\rho/2}} = \lim_{r \to \infty}\frac{\Re(\ln V(r e^{i \theta}))}{r^{\rho/2}} \leq \lim_{r \to \infty}\frac{|\ln V(r e^{i \theta})|}{r^{\rho/2}}<b.
    \end{equation*}
    The proof follows similarly for $\rho/2$ is an integer. Finally, consider $F(z) = V(z^2)$, then the nontrivial function $F$ vanishes on $\Lambda$ and satisfies the growth order and type. 
\end{proof}
We are now ready to prove Theorem \ref{MainTheorem}. 
\begin{proof}
(2) $\implies$ (1):By the definition and linearity of the STFT:
\begin{equation*}
    V_g f(x, \omega) = \int_{\R} (e^{i\alpha} h(t)) \overline{g(t-x)} e^{2\pi i \omega t} dt = e^{i\alpha} V_g h(x, \omega) 
\end{equation*}
Taking the magnitude, $|V_g f(x, \omega)| = |e^{i\alpha}| |V_g h(x, \omega)| = |V_g h(x, \omega)|$. This holds for all $(x, \omega) \in \R^2$, and thus for all $\lambda \in \Lambda$.
\\
(1) $\implies$ (2): Using Lemma \ref{STFTExtension} we can obtain the extended $H_f(z,z') = |V_gf(z,z')|^2$ and $H_h(z,z') = |V_gh(z,z')|^2$ as two entire functions in $z$ and $z'$ with known order and type in each variable. Denote $H(z,z') = H_f(z,z') -H_h(z,z')$. From Condition (1) we know that $H(x_k, \omega_k) =0$ for all $(x_k, \omega_k) \in \Lambda$. Consider the one-variable collection of entire functions $h_k(z) = H(z, \omega_k)$, each $h_k(z)$ is zero on $z = x_k$ for $(x_k,\omega_k)\in\Lambda $. Using the order and type computed in Lemma \ref{STFTExtension} and by the assumption on $\tau_1$, we obtain $\liminf_{j \to \infty}\frac{x_j}{j^{1/\rho_x}}<(\frac{2}{b_x\rho_x e})^{1/\rho_x}$ where $\rho_x,b_x$ are order and type of $h_k(z)$. Therefore Proposition $1$ indicates that $h_k(z)= H(z,\omega_k) = H_f(z,\omega_k)-H_h(z,\omega_k)=0$ for all $z \in \C$. Now fix arbitrary $z_0 \in \C$ and consider the function $\varphi_0(z') = H(z_0,z')$. By Lemma \ref{STFTExtension}, $\varphi_0(z')$ is an entire function of order $m$ and type $a$. By the assumption on $\tau_2$, Proposition $1$ indicates that the collection of $\omega_i$ forms a uniqueness set for $\varphi_0(z')$. Since $z_0\in \C$ is arbitrary, we have shown $H(z,z')\equiv 0$ for all $(z,z') \in \C^2$, which implies the restrictions of $z, z'$ to their real parts $x$ and $\omega$ satisfy $|V_g f(x, \omega)|^2 = |V_g h(x, \omega)|^2$. By Lemma \ref{spectrogram}, this is sufficient to conclude that $f = e^{i \alpha}h$ for some $\alpha \in \R$.
\end{proof}
\begin{remark}
    Proposition 2 tells that our result is sharp in the sense that the sufficient requirement on $\tau_1,\tau_2$ is not arbitrary.
\end{remark}
\begin{acknowledgement}
This work was partially supported by the National Science Foundation, grants DMS-2205771 and DMS-2309652. The authors thank the reviewers for their insightful and helpful comments.
\end{acknowledgement}

\ethics{Competing Interests}{The authors declare no conflict of interest.}

%%%%%%%%%%%%%%%%%%%%%%%% referenc.tex %%%%%%%%%%%%%%%%%%%%%%%%%%%%%%
% sample references
% %
% Use this file as a template for your own input.
%
%%%%%%%%%%%%%%%%%%%%%%%% Springer-Verlag %%%%%%%%%%%%%%%%%%%%%%%%%%
%
% BibTeX users please use
% \bibliographystyle{}
% \bibliography{}
%
%\biblstarthook{

%}

\end{document}